\newcommand*{\textlabel}[2]{%
  \edef\@currentlabel{#1}
  \phantomsection
  #1\label{#2}
}
\newtheoremstyle{custom}
  {3pt}
  {3pt}
  {\slshape}
  {}
  {\bfseries}
  {.}
  { }
   {}
\theoremstyle{custom}
\newtheorem{theorem}{Theorem}[section]
\newtheorem{proposition}[theorem]{Proposition}
\newtheorem{proposition/definition}[theorem]{Proposition/Definition}
\newtheorem{lemma}[theorem]{Lemma}
\theoremstyle{definition}
\newtheorem{definition}[theorem]{Definition}
\newtheorem{example}[theorem]{Example}
\theoremstyle{remark}
\newtheorem{remark}[theorem]{Remark}
\newtheoremstyle{exercise}
  {3pt}
  {6pt}
  {}
  {}
  {\bfseries}
  {:}
  { }
   {}
\theoremstyle{exercise}
\newtheorem{exercise}[theorem]{Exercise}
\newtheoremstyle{exercises}
  {3pt}
  {6pt}
  {}
  {}
  {\bfseries}
  {:}
  {\newline}
   {}
\theoremstyle{exercise}
\newtheorem{exercises}[theorem]{Exercises}
\def\boxit#1{\vbox{\hrule height1pt\hbox{\vrule width1pt\kern3pt
  \vbox{\kern3pt#1\kern3pt}\kern3pt\vrule width1pt}\hrule height1pt}}
\def\11{\mathbf 1}
\def\dim{{\rm dim}\;}
\def\be{\begin{equation}}
\def\ene{\end{equation}}
\def\im{{\rm Im}}
\def\dim{{\rm dim }}
\begin{document}

\title{Brill's equations as a $GL(V)$-module}
\author{Yonghui Guan}
 \begin{abstract}
 The Chow variety of polynomials that decompose as a product of linear forms has been studied for more than 100 years. Brill, Gordon \cite{Gordon} and others obtained set-theoretical equations for the Chow variety. In this article, I compute Brill's equations as a $GL(V)$-module.
 \end{abstract}
 \email{yonghuig@math.tamu.edu }
\keywords{Chow Variety, $GL(V)$-module, Brill's equations, Brill's map, 14M99}
\maketitle
\section{Introduction}
\subsection{Motivation}
There has been substantial recent interest in the equations of certain algebraic varieties that encode
natural properties of polynomials (see e.g. \cite{BBeis,MR3081636,MR3169697,BGLeis,LWsecseg,LMsec,MR2310544}). Such varieties are usually preserved by algebraic
groups and it is a natural question to understand the module structures of the spaces of equations. One variety of interest
is the {\it Chow variety} of polynomials that decompose as a product of linear forms, which is defined by
$Ch_d(V)=\mathbb{P}\{z\in S^dV|z=w_1\cdots w_d\,{\rm\ for\ some\ } w_i \in V\}\subset\mathbb{P}S^dV,$
where $V$ be a finite-dimensional complex vector space and  $\mathbb{P}S^dV$ is the projective space of homogeneous polynomials of degree $d$ on the dual space $V^*$.

The ideal of the Chow variety of polynomials that decompose as a product of linear forms has been studied for over 100 years, dating back at least to  Gordon and Hadamard. Let $S^\delta(S^dV)$ denote the space of homogeneous polynomials of degree $\delta$ on $S^dV^*$. The {\it Foulkes-Howe} map $h_{\delta,d}:S^\delta(S^dV)\rightarrow S^d(S^\delta V)$ was defined by Hermite \cite{hermite} when $\dim\ V=2$, and Hermite proved the map is an isomorphism in his celebrated \lq\lq Hermite reciprocity\rq\rq. Hadamard \cite{MR1554881} defined the map in
general and observed that its kernel is $I_\delta(Ch_d(V^*))$, the degree $\delta$ component of the ideal
of the Chow variety. We do not understand this map when $d>4$ (see \cite{MR1504330,MR983608,MR0037276,MR1651092,MR1243152,MR1601139}).

Brill and Gordon (see \cite{gkz,Gordon,MR2865915}) wrote down set-theoretic equations
for the Chow variety of degree $d+1$,  called \lq\lq Brill's equations\rq\rq. Brill's equations give a geometric derivation of set-theoretic equations for the Chow variety, it is a natural question to understand these equations in terms of a $GL(V)$-module from a representation-theoretic perspective, where $GL(V)$  denotes the {\it General Linear Group} of invertible linear maps from $V$ to $V$.


\subsection{Result}
The group $GL(V)$ has an induced action on  $S^dV$ (see \S\ref{Gvariety}). The Chow variety $Ch_d(V)$ is invariant under the action of $GL(V)$, therefore the ideal of $Ch_d(V)$ is a $GL(V)$-module (see \S\ref{Gvariety}). For any partition $\lambda$, let $S_\lambda V$ be the irreducible $GL(V)$-module determined by the partition $\lambda$. For example $S_{(d)}V = S^dV$, while $S_{(1^d)}V = \Lambda^dV$ is
the $d$-th exterior power of $V$.

\begin{theorem}\label{Brillmodule}
Assume $\dim$ $V\geq3$ and $d\geq2$. The degree $d+1$ equations for $Ch_d(V)$ discovered by Brill, as a $GL(V)$-module, are:
\begin{eqnarray*}
\begin{cases}
S_{(7,3,2)}V^* \ \   d=3;\\
\bigoplus_{j=2}^{d}S_{(d^2-j,d,j)}V^*\  d\neq3.
\end{cases}
\end{eqnarray*}
\end{theorem}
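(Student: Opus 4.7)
The plan is to realise Brill's equations as the image of a $GL(V)$-equivariant linear map and then decompose that image by Schur's lemma. Brill's construction produces, in a $GL(V)$-equivariant manner, a polynomial map $B\colon S^dV \to W$ of degree $d+1$ into an auxiliary $GL(V)$-module $W$ whose set-theoretic zero locus is $Ch_d(V)$; equivalently, $B$ is encoded by a $GL(V)$-linear map $\widetilde B\colon W^* \to S^{d+1}(S^dV)^*$, and the span of Brill's equations is $\operatorname{Image}(\widetilde B)$. The theorem thus reduces to identifying this image as a $GL(V)$-module.

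I would first decompose $W^*$ into irreducibles. Since $W$ is built from $V$ by a fixed functorial recipe, this is a Littlewood--Richardson/Pieri calculation producing a short list of partitions of $d^2+d$. Intersecting with the isotypic components that can appear in $S^{d+1}(S^dV)^*$, where the relevant plethysm restricted to partitions of length $\leq 3$ is accessible, the only possible candidates are $S_{\lambda_j}V^*$ with $\lambda_j = (d^2-j,d,j)$ for $0 \leq j \leq d$; all other summands of $W^*$ are killed by $\widetilde B$ on module-theoretic grounds alone. To decide which of the $S_{\lambda_j}V^*$ actually occur in the image, I would evaluate $\widetilde B$ on an explicit highest weight vector obtained from a semistandard tableau of shape $\lambda_j$ applied to a generic $P \in S^dV$. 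Since each $\lambda_j$ has length at most three, the entire test can be carried out in $\dim V = 3$. For $j = 0, 1$ the scalar should vanish identically (these components are too symmetric to cut out anything smaller than the ambient Veronese-like orbit closure), while for $j = 2, \dots, d$ it should not.

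The main obstacle is the numerical anomaly at $d = 3$. For generic $d$ the image is expected to contain $S_{(d^2-j,d,j)}V^*$ for $j = 2, \dots, d$ and miss $j = 0, 1$, and the omissions should follow from a uniform highest-weight calculation. At $d = 3$, however, the component $S_{(6,3,3)}V^*$ (the $j = d$ term in the general pattern) additionally drops out, and this must be verified by a direct calculation on a highest weight vector in $\dim V = 3$; $GL(V)$-equivariance then promotes the identity to arbitrary $V$. I expect this isolated tableau identity to be the technical heart of the proof, with the rest reducing to uniform Littlewood--Richardson and plethysm bookkeeping plus the nonvanishing check on the surviving components. A sanity check against the $d = 2$ case (where the formula returns $S_{(2,2,2)}V^*$, the classical degree-three equations for the secant variety of the Veronese) confirms the pattern at the base of the induction.
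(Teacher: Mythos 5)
Your proposal follows the paper's strategy at the level of outline — realize Brill's equations as the image of a $GL(V)$-equivariant map, decompose the target by Pieri, note that each potential summand $S_{(d^2-j,d,j)}V$ occurs with multiplicity one so membership in the image is a yes/no question, and decide each case by evaluating on an explicit weight vector in $\dim V=3$. However, as written the proposal is a plan rather than a proof, and there are several genuine gaps.

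First, the heart of the matter is the actual computation, which you never perform. The paper builds the polarization $\bar{\mathfrak{B}}\colon S^{d+1}(S^dV)\to S_{(d,d)}V\otimes S^{d^2-d}V$, evaluates it on the specific weight vectors $v_j=(e_1^{d-1}e_2)^d(e_1^{d-j}e_3^j)$, writes the highest weight vector $\tilde v_j\in S_{(d^2-j,d,j)}V$ explicitly, and computes the Hermitian pairing $\langle\bar{\mathfrak{B}}(v_j),\tilde v_j\rangle$. The nonvanishing for $j\geq 2$ with $(j,d)\neq(3,3)$ is a concrete estimate (the term $(-1)^j(d-j)!\,j!\,(d^2-d)!$ dominates) plus a finite check of small cases. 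Your proposal asserts this should hold but gives no mechanism, so there is no proof of the positive half of the theorem. Second, the justification for the vanishing at $j=0,1$ — that these components are \emph{too symmetric to cut out anything smaller than the ambient Veronese-like orbit closure} — is not an argument. The paper's reason for $j=0$ is that $Ch_d(\mathbb{C}^2)=S^d\mathbb{C}^2$, so any length-$\leq 2$ module gives equations vanishing identically; the case $j=1$ is handled by an explicit pairing computation together with the $j=0$ result. These need to be stated and proved, not asserted on aesthetic grounds. Third, for the $(j,d)=(3,3)$ anomaly you propose a direct calculation on a highest weight vector, but the paper's argument is plethysm-theoretic: $S_{(6,3,3)}V$ simply does not appear in $S^4(S^3V)$, so it cannot be in the image. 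Your proposed route would also work, but again you do not carry it out. Finally, the phrase about evaluating $\widetilde B$ on a highest weight vector \emph{applied to a generic $P\in S^dV$} conflates the degree-$(d+1)$ polynomial map $\mathfrak{B}$ on $S^dV$ with the linear map on $S^{d+1}(S^dV)$ obtained by polarizing it; one needs to polarize first (the paper's Lemma constructing $\bar{\mathfrak{B}}$) before any highest-weight test makes sense.
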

\begin{remark}
Compare the codimension of $Ch_d(\mathbb{C}^3)$ with the dimension of all the modules in Theorem \ref{Brillmap} that define $Ch_d(\mathbb{C}^3)$ set-
theoretically: When $d=2$, the codimension of $Ch_2(\mathbb{C}^3)$ is $1$ and the dimension of $S_{(2,2,2)}\mathbb{C}^{*3}$ is $1$. When $d=3$, the codimension of $Ch_3(\mathbb{C}^3)$ is $3$ while the dimension of $S_{(7,3,2)}\mathbb{C}^{*3}$ is $35$. In general the dominant term of the codimension of $Ch_d(\mathbb{C}^3)$
is $\frac{d^2}{2}$ , but the dominant term of the dimension of  the modules from Brill's equations that define $Ch_d(\mathbb{C}^3)$ is $\frac{d^7}{2}$. Therefore, the Chow variety
is far from being a complete intersection.
\end{remark}
\subsection{Overview of method}

Brill's equations \cite{gkz,Gordon,MR2865915} are set-theoretical equations for the Chow variety $Ch_d(V)$.  The Chow variety $Ch_d(V)$
is the zero set of a polynomial map $\mathfrak{B}:S^dV\rightarrow S_{(d,d)}V\otimes S^{d^2-d}V$ of degree $d+1$ (see $\S \ref{polymap}$).
I determine Brill's equations as a $GL(V)$-module to understand these equations and write down these equations explicitly.

The idea is to construct the polarization (see \S\ref{polarization}) $\bar{\mathfrak{B}}$ of $\mathfrak{B}$, where
$\bar{\mathfrak{B}}:S^{d+1}(S^dV)\rightarrow S_{(d,d)}V\otimes S^{d^2-d}V$, and then determine the image of $\bar{\mathfrak{B}}$, whose dual is isomorphic to the $GL(V)$-module corresponding to Brill's equations. I call $\bar{\mathfrak{B}}$ Brill's map.

Brill's map $\bar{\mathfrak{B}}$ is a $GL(V)$-module map,
the space $S_{(d,d)}V\otimes S^{d^2-d}V$ can be decomposed by Pieri's rule (see e.g. \cite{FH}),
\begin{eqnarray*}
S_{(d,d)}V\otimes S^{d^2-d}V=\bigoplus_{j=0}^{d}S_{(d^2-j,d,j)}V,
\end{eqnarray*}
I determine which irreducible $GL(V)$-modules are in the image of Brill's map.
\subsection{Organization}
In $\S\ref{Background}$ I define Brill's equations $\mathfrak{B}$ following the notation in \cite{MR2865915} in $\S 8.6$,
and  review the polarization of a polynomial map, G-variety and how to write down highest weight vectors of a module via raising operators. In $\S\ref{imageBrillmap}$, I use the polarization of Brill's polynomial map $\mathfrak{B}$ to construct Brill's map $\overline{\mathfrak{B}}$, which is a $GL(V)-$module map. I write down the highest vectors of the modules $S_{(d^2-j,d,j)}V\subset S_{(d,d)}V\otimes S^{d^2-d}V$, then I compute the images of some given weight vectors $v_j$ in $S^{d+1}(S^dV)$ under Brill's map $\overline{\mathfrak{B}}$, and determine whether the projection of $\overline{\mathfrak{B}}(v_j)$ to the module $S_{(d^2-j,d,j)}V$ is zero to determine the image of Brill's  map.
\subsection{Acknowledgement}
 I thank my advisor J.M. Landsberg for discussing all the details throughout this article. I thank C. Ikenmeyer  for discussing the ideal of
 Chow variety.
\section{Preliminaries }\label{Background}
\subsection{Polarization of a polynomial map}\label{polarization}
\begin{definition}
Let $V_1,\cdots, V_d$ be complex vector spaces, define a map $\varphi: V_1\times\cdots\times V_d\rightarrow V_1\otimes\cdots\otimes V_d$
by $\varphi(v_1,\cdots,v_d)=v_1\otimes\cdots\otimes v_d$. {\it The universal property of tensors}:
given a complex vector space $W$ and a multi-linear map $h:V_1\times\cdots\times V_d\rightarrow W$, there is a unique
 linear map $\tilde{h}:V_1\otimes\cdots\otimes V_d\rightarrow W$, such that $h=\tilde{h} \circ\varphi$.
\end{definition}
\begin{definition}
Let $W$ be a complex vector space, a map $P: W\rightarrow \mathbb{C}^m$
is a polynomial map of degree $k$ if
$P=(P_1,\cdots,P_m),$
and each $P_i$ $(i=1,\cdots,m)$ is a homogenous polynomial of degree $k$ on $W$.\\
Define the {\it complete\ polarization}
$\bar{P}:W\times\cdots\times W\rightarrow \mathbb{C}^m$
of $P$ to be
\begin{eqnarray*}
\bar{P}(w_1,\cdots,w_k)=\frac{1}{k!}\sum_{I\subset[k],I\neq\emptyset}(-1)^{k-|I|}P(\sum_{i\in I}w_i).
\end{eqnarray*}
Where $[k]=\{1,\cdots,k\}$, $w_i\in W$ and $\bar{P}$ is a symmetric multi-linear map.
By the universal property of tensors, $\bar{P}$ is considered as a map
$\bar{P}:W^{\otimes k}\rightarrow\mathbb{C}^m$.
By the symmetry of $\bar{P}$, $\bar{P}$ can be also seen as a map
$\bar{P}:S^kW\rightarrow \mathbb{C}^m$,
such that
 \begin{eqnarray}\label{por}
\bar{P}(w_1\cdots w_k)=\frac{1}{k!}\sum_{I\subset[k],I\neq\emptyset}(-1)^{k-|I|}P(\sum_{i\in I}w_i),
\end{eqnarray}
and it can be extended linearly to the whole space.
\end{definition}
\begin{example}
Let {\rm dim} $V$=2, and let $\{e_1,e_2\}$ be a basis of $V$. Consider the polynomial map
$P:V\rightarrow \mathbb{C}^2$ defined by
$$a_1e_1+a_2e_2\mapsto (a_1^2,a_1^2+a_2^2).$$
$P$ is a polynomial map of degree 2, so by \eqref{por} $\bar{P}:S^2V\rightarrow \mathbb{C}^2$ is defined by
\begin{eqnarray*}
 \bar{P}((a_1e_1+a_2e_2)(a_3e_1+a_4e_2))&=&\frac{1}{2}[P(a_1e_1+a_2e_2+a_3e_1+a_4e_2)\\
 &&-P(a_1e_1+a_2e_2)-P(a_3e_1+a_4e_2)]\\
 &=&\frac{1}{2}[((a_1+a_3)^2,(a_1+a_3)^2+(a_2+a_4)^2)\\
 &&-(a_1^2,a_1^2+a_2^2)-(a_3^2,a_3^2+a_4^2)]\\
 &=&(a_1a_3,a_1a_3+a_2a_4).
 \end{eqnarray*}
 Therefore
 \begin{eqnarray*}
 \bar{P}(a e_1^2+be_1e_2+ce_2^2)&=&a\bar{P}(e_1^2)+b\bar{P}(e_1e_2)+c\bar{P}(e_2^2)\\
 &=&(a,a)+(0,0)+(0,c)\\
 &=&(a,a+c).
 \end{eqnarray*}
\end{example}

\subsection{G-variety}\label{Gvariety}
I follow the notation in \cite{MR2865915} in $\S 4.7$.
\begin{definition}
Let $W$ be a complex vector space. A variety $X\subset\mathbb{P}W$ is called a {\it G-variety} if $W$ is a module for the group  $G$
and for all $g\in G$ and $x\in X$, $g\cdot x\in X.$
\end{definition}
G has an induced action on $S^dW^*$ such that for any $P\in S^dW^*$ and $w\in W$,
$g\cdot P(w)=P(g^{-1}\cdot w)$. $I_d(X)$ is a linear subspace of $S^dW^*$ that is invariant
under the action of $G$, therefore:
\begin{proposition}
If $X\subset\mathbb{P}W$ is a $G$-variety, then the ideal of $X$ is a $G$-submodule of $S^\bullet W^*:=\oplus_{d=0}^{\infty}S^dW^*$.
\end{proposition}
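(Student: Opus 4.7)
The plan is a brief definition chase, essentially spelling out the remark the paper makes immediately before the statement. I would split the argument into two observations: first, each graded component $I_d(X) \subset S^dW^*$ is a $G$-submodule; second, the decomposition $I(X) = \bigoplus_d I_d(X)$ into graded pieces is $G$-equivariant, so the full ideal is a $G$-submodule of $S^\bullet W^*$.

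For the first step, I would fix $P \in I_d(X)$, $g \in G$, and an arbitrary point $x \in X$ (represented by a vector in the affine cone $\hat X \subset W$). By the formula $(g \cdot P)(w) = P(g^{-1} \cdot w)$ recalled just above the statement, $(g \cdot P)(x) = P(g^{-1} \cdot x)$. Since $X$ is a $G$-variety, $g^{-1} \cdot x \in X$, and so $P(g^{-1} \cdot x) = 0$. Because $x$ was arbitrary, $g \cdot P \in I_d(X)$, showing that $G$ stabilizes each graded piece $I_d(X)$.

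For the second step, the $G$-action on $S^\bullet W^* = \bigoplus_{d \geq 0} S^dW^*$ preserves degree by construction, since it acts on each summand separately. Hence if every $I_d(X)$ is $G$-stable, then $I(X) = \bigoplus_{d \geq 0} I_d(X)$ is $G$-stable as well, which is precisely the claim.

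There is no substantive obstacle here: the proposition is really the formal statement that the action on $S^\bullet W^*$ was chosen precisely so that ``vanishing on a $G$-invariant subset'' becomes a $G$-invariant condition. The only minor book-keeping concerns the passage from points of $\mathbb{P}W$ to vectors in $W$, but since $G$ acts linearly on $W$, the affine cone $\hat X$ is itself $G$-invariant, so working with homogeneous polynomials on $W$ causes no difficulty.
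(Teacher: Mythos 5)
Your proof is correct and matches the paper's reasoning: the paper simply asserts that $I_d(X)$ is a $G$-invariant subspace of $S^dW^*$ (citing the formula $(g\cdot P)(w)=P(g^{-1}\cdot w)$), and you have merely unwound that assertion and added the routine remark that the action preserves the grading. No discrepancy.
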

\begin{example}
The Group $GL(V)$ has an induced action on $S^dV$ and $S^k(S^dV^*)$ similarly. The Chow variety $Ch_d(V)$
 is invariant under the action of $GL(V)$, therefore it is a $GL(V)$-variety and its ideals is $GL(V)$-submodules of $S^\bullet (S^dV^*)=\oplus_{k=0}^{\infty}S^k(S^dV^*)$.
\end{example}

Let $X\subset\mathbb{P}W$ be a $G$-variety, and $M$ be an irreducible submodule of $S^\bullet W^*$, then either $M\subset I(X)$
or $M\cap I(X)=\emptyset$. Thus to test if $M$ gives equations for $X$, one only need to test one polynomial in $M$.
\subsection{Representation theory}
I follow the notation in \cite{FH}.
Let dim $V=n$ and  $\{e_1,e_2,\cdots,e_n\}$ be a basis of $V$. The group $GL(V)$ has a natural action on $V^{\otimes d}$ such that
$g\cdot(v_1\otimes v_2\cdots\otimes v_d)=g\cdot v_1\otimes\cdots\otimes g\cdot v_d$.
Let $B\subset GL(V)$ be the subgroup of upper-triangular matrices (a {\it Borel subgroup}).
For any partition $\lambda=(\lambda_1,\cdots,\lambda_n)$ with order $d$,  there is a unique line in $S_\lambda V\subset V^{\otimes d}$ that is preserved by $B$, called a {\it highest weight line}.
Let $\mathfrak{gl}(V)$ be the Lie algebra of $GL(V)$,
there is an induced action of $\mathfrak{gl}(V)$ on $V^{\otimes d}$. For $X\in \mathfrak{gl}(V)$,
$$X.(v_1\otimes v_2\cdots\otimes v_d)=X.v_1\otimes v_2\cdots\otimes v_d+v_1\otimes X.v_2\otimes\cdots \otimes v_d+\cdots+v_1\otimes v_2\cdots \otimes v_{d-1}\otimes X.v_d.$$
Let $E^i_j\in\mathfrak{gl}(V)$ such that $E^i_j(e_j)=e_i$ and $E^i_j(e_k)=0$ when $k\neq j$. If $i<j$,  $E^i_j$ is called
 a {\it raising operator}; if  $i>j$,  $E^i_j$  is called a {\it lowering operator}.

A {\it highest weight vector} of a $GL(V)$-module is a weight vector that is killed by all raising operators.
Each realization of the module $S_\lambda V$ has a unique highest weight line. Let $W$ be a $GL(V$)-module, the multiplicity of $S_\lambda V$ in $W$ is equal to the dimension of the highest weight space with respect to the partition $\lambda$.

Define the weight space $W_{(a_1,\cdots,a_n)}$$\subset$ $ S^k(S^dV)$ to be the set of all the weight vectors whose weights are $(a_1,\cdots,a_n)$.
Note that  $S^dV$ has a natural basis $\{e_1^{\alpha_1}\cdots e_n^{\alpha_n}\}_{\alpha_1+\cdots+\alpha_n=d} $.
\begin{example}
$S_{(4,2)}V\subset S^3(S^2V)$ has multiplicity 1.
\begin{proof}
Let $v$ be a highest weight vector of $S_{(4,2)}V$. The weight space $W_{(4,2)}$ has a basis
$\{(e_1^2)^2(e_2^2),(e_1^2)(e_1e_2)^2\}$. Write
$v=a(e_1^2)^2(e_2^2)+b(e_1^2)(e_1e_2)^2$, then $E^1_2v=0$ implies
$(2a+2b)(e_1^2)^2(e_1e_2)=0$, therefore $a=-b$, so the multiplicity  of $S_{(4,2)}V$ in $S^3(S^2V)$
is 1.
\end{proof}
\end{example}
\subsection{Brill's equations}\label{polymap}
Following the idea in $\S 8.6$ in \cite{MR2865915}, I use the following notation to define Brill's equations. We first define two maps $\pi_{d,d}$ and $Q_d$, then use them to define Brill's equations.\\\\

Define the projection map $\pi_{d,d}:S^dV\otimes S^dV\rightarrow S_{(d,d)}V$ by
\begin{eqnarray}\label{a}
(l_1\cdots l_d)\otimes(m_1\cdots m_d)\mapsto\sum_{\sigma\in\mathfrak{S}_d}(l_1\wedge m_{\sigma(1)})\cdot(l_2\wedge m_{\sigma(2)})\cdots(l_d\wedge m_{\sigma(d)}),
\end{eqnarray}
and then extend linearly to the whole space.\\\\

Recall $S^\bullet V=\oplus_{i=0}^{\infty}S^iV$. Define a multiplication on $S^\bullet V \otimes S^\bullet V$ by,
for any $a,b,c,d\in S^\bullet V$,
\begin{eqnarray}\label{b}
(a\otimes b)\cdot(c\otimes d)=ac\otimes bd,
\end{eqnarray}
and this extends linearly to  $S^\bullet V \otimes S^\bullet V$.\\

Let $f\in S^\delta V$ and let $f_{j,\delta-j}\in S^jV\otimes S^{\delta-j}V$ be the $j$-th polarization of $f$. Define maps
\begin{eqnarray*}\label{c}
E_j:S^{\delta}V\rightarrow S^jV\otimes S^{j(\delta-1)}V,\\\nonumber
f\mapsto f_{j,\delta-j}\cdot(1\otimes f^{j-1}).
\end{eqnarray*}
If $j>\delta$ define $E_j(f)=0$.\\
\begin{example}
Let $f=l_1l_2l_3\in S^3V$, then
\begin{eqnarray*}\label{c}
E_1(f)&=&f_{1,2}\cdot(1\otimes 1)\\
&=&l_1\otimes l_2l_3+l_3\otimes l_1l_2+l_2\otimes l_1l_3.
\end{eqnarray*}
\begin{eqnarray*}
E_2(f)&=&f_{2,1}\cdot(1\otimes l_1l_2l_3)\\
&=&(l_1l_2\otimes l_3+l_1l_3\otimes l_2+l_2l_3\otimes l_1)\cdot (1\otimes l_1l_2l_3).\\
&=&l_1l_2\otimes l_1l_2l_3^2+l_1l_3\otimes l_1l_2^2l_3+l_2l_3\otimes l_1^2l_2l_3.
\end{eqnarray*}
\begin{eqnarray*}
E_3(f)&=&f_{3,0}\cdot(1\otimes f^2)\\
&=&f\otimes f^2\\
&=&l_1l_2l_3\otimes l_1^2l_2^2l_3^2.
\end{eqnarray*}
\end{example}
The elementary symmetric and power sum function are:
$$e_j=e_j(x_1,\cdots,x_{\underline{v}})=\sum_{1\leq i_1<i_2<\cdots<i_j\leq \underline{v}}x_{i_1}\cdots x_{i_j},$$
$$p_j=p_i(x_1,\cdots,x_{\underline{v}})=\sum_{i=1}^{\underline{v}}x_i^j.$$
The power sum can be written in terms of symmetric function using Girard formula:
\begin{eqnarray}\label{Girard formula}
p_k=\mathcal{P}_k(e_1,\cdots,e_d)=\sum_{i_1+2i_2+\cdots d i_d=k}k(-1)^{k+i_1+i_2+\cdots  i_d}\frac{(i_1+i_2+\cdots  i_d-1)!}{i_1!\cdots i_d!}e_1^{i_1}\cdots e_d^{i_d}.
\end{eqnarray}
\begin{example}
$p_2=\mathcal{P}_2(e_1,e_2)=e_1^2-2e_2$. $p_3=\mathcal{P}_3(e_1,e_2,e_3)=e_1^3-3e_1e_2+3e_3$.
\end{example}
Next, we use Girard formula and $E_j$ to define $Q_d$ .
Define polynomial maps
\begin{eqnarray*}
Q_{d,\delta}: S^\delta V\longrightarrow S^dV\otimes S^{d(\delta-1)}V
\end{eqnarray*}by
\begin{eqnarray}\label{defineQ}
Q_{d,\delta}(f)=\mathcal{P}_d(E_1(f),\cdots,E_d(f)).
\end{eqnarray}
Write $Q_d=Q_{d,d}$.
Explicitly
\begin{equation}\label{Brillexp}
\begin{aligned}
&Q_d(f)=\\
&\sum_{i_1+2i_2+\cdots+di_d=d}d(-1)^{d+i_1+\cdots+i_d}\frac{(i_1+\cdots+i_d-1)!}{i_1!\cdots i_d!} (\prod_{j=1}^{d}f_{j,d-j}^{i_j})\cdot(1\otimes f^{d-(i_1+\cdots+i_d)}).
\end{aligned}
\end{equation}
\begin{example}Let $d=2$, and $f\in S^2V$, by \eqref{Brillexp},
\begin{eqnarray*}
Q_2(f)=f_{1,1}^2-2f\otimes f.
\end{eqnarray*}
\end{example}
\begin{lemma}\label{Qlinear}{\rm ( \S 8.6 \cite{MR2865915})}
Let $l_i\in V$ for $i=1,\cdots,d$, then
\begin{eqnarray}
Q_d(l_1\cdots l_d)=\sum_{j=1}^dl_j^d\otimes(l_1^d\cdots l_{j-1}^dl_{j+1}^d\cdots l_{d}^d).
\end{eqnarray}
\end{lemma}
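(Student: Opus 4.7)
The plan is to reduce $Q_d(l_1\cdots l_d)$ to the power-sum--elementary symmetric identity (Girard's formula) by identifying the maps $E_j$, evaluated on $f=l_1\cdots l_d$, with the elementary symmetric polynomials in a carefully chosen set of elements of $S^\bullet V\otimes S^\bullet V$.

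First I would compute the $j$-th polarization of $f=l_1\cdots l_d$ explicitly: using the definition of polarization one sees that
\[
f_{j,d-j}\;=\;\sum_{\substack{S\subset[d]\\|S|=j}}\Bigl(\prod_{i\in S}l_i\Bigr)\otimes\Bigl(\prod_{k\notin S}l_k\Bigr),
\]
so that, by the definition of $E_j$ and the multiplication in $S^\bullet V\otimes S^\bullet V$ given in \eqref{b},
\[
E_j(f)\;=\;f_{j,d-j}\cdot\bigl(1\otimes f^{j-1}\bigr)\;=\;\sum_{|S|=j}\Bigl(\prod_{i\in S}l_i\Bigr)\otimes\Bigl(\prod_{k\notin S}l_k\cdot\prod_{k=1}^d l_k^{j-1}\Bigr).
\]

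Second, I introduce the auxiliary elements
\[
x_i\;:=\;l_i\otimes\prod_{k\neq i}l_k\qquad(i=1,\dots,d)
\]
in the commutative ring $S^\bullet V\otimes S^\bullet V$ and I claim $E_j(f)=e_j(x_1,\dots,x_d)$. Indeed,
\[
e_j(x_1,\dots,x_d)\;=\;\sum_{|S|=j}\prod_{i\in S}x_i\;=\;\sum_{|S|=j}\Bigl(\prod_{i\in S}l_i\Bigr)\otimes\prod_{i\in S}\prod_{k\neq i}l_k,
\]
and a routine count of how often each $l_k$ appears in the right-hand tensor factor ($j-1$ times if $k\in S$, $j$ times if $k\notin S$) shows that $\prod_{i\in S}\prod_{k\neq i}l_k=\prod_{k\notin S}l_k\cdot\prod_{k=1}^d l_k^{j-1}$, matching the expression for $E_j(f)$ above. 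This exponent bookkeeping is the only nontrivial step and is the main (mild) obstacle in the proof.

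Finally, combining this identification with the definition \eqref{defineQ} of $Q_d$ and Girard's formula \eqref{Girard formula},
\[
Q_d(f)\;=\;\mathcal{P}_d\bigl(E_1(f),\dots,E_d(f)\bigr)\;=\;\mathcal{P}_d\bigl(e_1(x),\dots,e_d(x)\bigr)\;=\;p_d(x_1,\dots,x_d)\;=\;\sum_{j=1}^d x_j^d.
\]
Since $x_j^d=l_j^d\otimes\bigl(\prod_{k\neq j}l_k\bigr)^d=l_j^d\otimes(l_1^d\cdots l_{j-1}^d l_{j+1}^d\cdots l_d^d)$, this yields the stated formula.
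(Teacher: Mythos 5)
Your proof is correct. The paper itself does not supply a proof of this lemma---it simply cites \S 8.6 of \cite{MR2865915}---so there is no in-paper argument to compare against. Your argument is exactly the one the construction of $Q_d$ is designed to enable: the polarization $f_{j,d-j}$ of $f=l_1\cdots l_d$ is the subset sum you wrote, the exponent count showing $\prod_{i\in S}\prod_{k\neq i}l_k=\bigl(\prod_{k\notin S}l_k\bigr)\cdot f^{j-1}$ is right, and this identifies $E_j(f)$ with $e_j(x_1,\dots,x_d)$ in the commutative ring $S^\bullet V\otimes S^\bullet V$ so that Girard's formula \eqref{Girard formula} applied in that ring gives $Q_d(f)=p_d(x_1,\dots,x_d)=\sum_j x_j^d$, which is the claimed expression.
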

Now we define Brill's polynomial map $\mathfrak{B}:S^dV\rightarrow S_{(d,d)}V\otimes S^{d^2-d}V$ invariantly. It is the composition of the following
two maps:
$$S^dV\rightarrow S^dV\otimes S^dV\otimes S^{d^2-d}V\rightarrow S_{(d,d)}V\otimes S^{d^2-d}V,$$
where the first map sends $f\in S^dV$ to $f\otimes Q_d(f)$, and the second map is ${\pi_{d,d}\otimes Id_{S^{d^2-d}V}}$.
By Lemma \ref{Qlinear},
\begin{eqnarray*}
\mathfrak{B}(l_1\cdots l_d)&=&{\pi_{d,d}\otimes Id_{S^{d^2-d}V}}[(l_1\cdots l_d)\otimes\sum_{j=1}^dl_j^d\otimes (l_1^d\cdots l_{j-1}^dl_{j+1}^d\cdots l_{d}^d)]\\
&=&0.
\end{eqnarray*}

The converse is also true:

\begin{theorem}\label{brillequations}{\rm (Brill,Gordon \cite{Gordon}, Gelfand-Kapranov-Zelevinsky \cite{gkz}, Briand \cite{MR2664658})}
Consider the polynomial map
\begin{eqnarray*}
\mathfrak{B}:S^dV\rightarrow S_{(d,d)}V\otimes S^{d^2-d}V
\end{eqnarray*}
given by
\begin{eqnarray}
\mathfrak{B}(f)={\pi_{d,d}\otimes Id_{S^{d^2-d}V}}[f\otimes Q_d(f)].
\end{eqnarray}
Then $\mathfrak{B}(f)=0\Leftrightarrow [f]\in Ch_d(V)$.
\end{theorem}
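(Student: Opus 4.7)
The easy implication $[f]\in Ch_d(V)\Rightarrow\mathfrak{B}(f)=0$ is already displayed in the lines preceding the theorem statement: by Lemma \ref{Qlinear}, for $f = l_1\cdots l_d$,
$$\mathfrak{B}(f) = \sum_{j=1}^d \pi_{d,d}\bigl((l_1\cdots l_d)\otimes l_j^d\bigr)\otimes \prod_{i\neq j} l_i^d,$$
and each $\pi_{d,d}((l_1\cdots l_d)\otimes l_j^d)$ collapses, by \eqref{a}, to $d!\prod_{i=1}^d(l_i\wedge l_j) = 0$, the $i=j$ factor being $l_j\wedge l_j$.

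For the converse, the plan is to follow the Brill--Gordan strategy as worked out in \cite{gkz} and \cite{MR2664658}. The first ingredient I would establish is a divisibility test: for $F\in S^d V$ and $0\neq l\in V$, $\pi_{d,d}(F\otimes l^d)=0$ if and only if $l$ divides $F$. The ``if'' direction reduces to the forward computation: writing $F=lG$ with $G\in S^{d-1}V$, the map $G\mapsto \pi_{d,d}((lG)\otimes l^d)$ is linear in $G$ and, by the same cancellation $l\wedge l=0$ used above, vanishes on the spanning set of completely decomposable $G$, hence on all of $S^{d-1}V$. The ``only if'' direction is an explicit coordinate check: choosing a basis with $l=e_1$ and writing $S^d V = l\cdot S^{d-1}V \oplus\mathrm{span}\{e_2^{\alpha_2}\cdots e_n^{\alpha_n}\}$, the images of the pure monomials $e_2^{\alpha_2}\cdots e_n^{\alpha_n}$ under $\pi_{d,d}(-\otimes e_1^d)$ can be identified with linearly independent semistandard tableaux in $S_{(d,d)}V$.

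The second ingredient exploits the design of $Q_d$. By construction \eqref{Brillexp} via Girard's identity \eqref{Girard formula}, Lemma \ref{Qlinear} shows that the $S^d V$-factor of $Q_d$ behaves like a power-sum of the linear factors of $f$. I would then reduce to the case $\dim V = 2$ by restricting to a generic 2-plane $U^*\subset V^*$, where $f|_{U^*}$ factors tautologically as $l_1\cdots l_d$ with $l_j\in U$; the vanishing of $\mathfrak{B}(f)$, combined with the divisibility test above, then forces each $l_j$ (after a lift to $V$) to divide $f$ itself. Varying $U$ and tracking the $l_j$ algebraically is meant to yield $d$ global linear forms whose product is $f$ up to a scalar.

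The main obstacle will be this last gluing step: organizing the one-parameter families of local factorizations on different 2-planes into a coherent global factorization. This is where \cite{MR2664658} does the real work, ultimately via a dimension and irreducibility argument showing that every component of $\{\mathfrak{B}(f) = 0\}\subset\mathbb{P}(S^d V)$ has dimension at most $\dim Ch_d(V)$; together with the containment $Ch_d(V)\subseteq\{\mathfrak{B}=0\}$ established in the forward direction, this forces the two loci to coincide set-theoretically.
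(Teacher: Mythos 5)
The paper itself does not prove this theorem: it is stated as a classical result with attributions to Brill--Gordon \cite{Gordon}, Gelfand--Kapranov--Zelevinsky \cite{gkz}, and Briand \cite{MR2664658}, and the remark immediately following notes that Brill's original argument for the converse had a gap, repeated in \cite{gkz}, and fixed only by Briand. The only implication the paper actually establishes is the forward one, via the computation $\mathfrak{B}(l_1\cdots l_d)=0$ displayed just before the theorem statement. Your forward direction reproduces exactly that computation, and your divisibility criterion $\pi_{d,d}(F\otimes l^d)=0\Leftrightarrow l\mid F$ is a correct and relevant auxiliary lemma: the ``if'' direction follows from the same $l\wedge l=0$ cancellation, and the ``only if'' direction does come down to the linear independence of the monomials $\prod_{i\ge2}(e_1\wedge e_i)^{\alpha_i}$ inside $S^d(\Lambda^2 V)$. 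Your outline of the converse---restrict to generic $2$-planes, factor the resulting binary form, and glue---is the standard Brill--Gordan strategy, and you correctly flag the gluing step as precisely the place where the historical gap lay and where Briand's analysis does the real work. So your proposal is consistent with the paper's treatment of this theorem: like the paper, it proves only the easy implication and points to the references for the converse, just more explicitly about why the converse is delicate.
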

\begin{remark}There was
a gap in Brill's argument, that was repeated in \cite{gkz} and finally fixed by E. Briand in
\cite{MR2664658}.
\end{remark}
\section{The image of Brill's map}\label{imageBrillmap}
\subsection{ Construction of Brill's map}
First  consider the polarization $\overline{Q_d}$ of $Q_d$ , where $Q_d:S^dV\rightarrow S^dV\otimes S^{d^2-d}V$.
\begin{example}Let $d=2$, and $f,g\in S^2V$,  by \eqref{Brillexp}
\begin{eqnarray*}
Q_2(f)=f_{1,1}^2-2f\otimes f.
\end{eqnarray*}
Therefore by \eqref{por}, $\overline{Q_2}:S^2(S^2V)\rightarrow S^2V\otimes S^2V$ is defined by:
\begin{eqnarray*}
\bar{Q}_2(f\cdot g)&=&\frac{1}{2}((f+g)_{1,1}^2-2(f+g)\otimes (f+g)-(f_{1,1}^2-2f\otimes f)-(g_{1,1}^2-2g\otimes g))\\
&=&f_{1,1}g_{1,1}-f\otimes g-g\otimes f.
\end{eqnarray*}
So by \eqref{b}
\begin{eqnarray*}
\bar{Q}_2(e_1e_2\cdot e_1e_2)&=&(e_1e_2)_{1,1}^2-2(e_1e_2)\otimes(e_1e_2)\\
&=&(e_1\otimes e_2+e_2\otimes e_1)^2-2(e_1e_2)\otimes(e_1e_2)\\
&=&e_1^2\otimes e_2^2+e_2^2\otimes e_1^2.
\end{eqnarray*}

\begin{eqnarray*}
\bar{Q}_2(e_1^2\cdot e_1e_2)&=&(e_1e_2)_{1,1}\cdot(e_1^2)_{1,1}-(e_1^2)\otimes(e_1e_2)-(e_1e_2)\otimes(e_1^2)\\
&=&(e_1\otimes e_2+e_2\otimes e_1)\cdot(2e_1\otimes e_1)-(e_1^2)\otimes(e_1e_2)-(e_1e_2)\otimes(e_1^2)\\
&=&e_1^2\otimes e_1e_2+e_1e_2\otimes e_1^2.
\end{eqnarray*}

\begin{eqnarray*}
\bar{Q}_2(e_1e_2\cdot e_1e_3)&=&(e_1e_2)_{1,1}\cdot(e_1e_3)_{1,1}-(e_1e_3)\otimes(e_1e_2)-(e_1e_2)\otimes(e_1e_3)\\
&=&(e_1\otimes e_2+e_2\otimes e_1)\cdot(e_1\otimes e_3+e_3\otimes e_1)-(e_1e_3)\otimes(e_1e_2)-(e_1e_2)\otimes(e_1e_3)\\
&=&e_1^2\otimes e_2e_3+e_2e_3\otimes e_1^2.
\end{eqnarray*}

\begin{eqnarray*}
\bar{Q}_2(e_1e_2\cdot e_3^2)&=&(e_1e_2)_{1,1}\cdot(e_3^2)_{1,1}-(e_3^2)\otimes(e_1e_2)-(e_1e_2)\otimes(e_3^2)\\
&=&(e_1\otimes e_2+e_2\otimes e_1)\cdot(2e_3\otimes e_3)-(e_3^2)\otimes(e_1e_2)-(e_1e_2)\otimes(e_3^2)\\
&=&2e_1e_3\otimes e_2e_3+2e_2e_3\otimes e_1e_3-e_3^2\otimes e_1e_2-e_1e_2\otimes e_3^2.
\end{eqnarray*}
\end{example}

In general, $\overline{Q_d}:S^d(S^dV)\rightarrow S^dV\otimes S^{d^2-d}V$ is used to define Brill's map $\bar{\mathfrak{B}}$:
\begin{lemma}\label{Brillmapexp}
The polarization of Brill's polynomial map $\mathfrak{B}$
\begin{eqnarray*}
\bar{\mathfrak{B}}:S^{d+1}(S^dV)\rightarrow S_{(d,d)}V\otimes S^{d^2-d}V
\end{eqnarray*}
is
\begin{eqnarray}
\bar{\mathfrak{B}}(f_1f_2\ldots f_{d+1})=\frac{1}{d+1}\sum_{i=1}^{d+1}{\pi_{d,d}\otimes Id_{S^{d^2-d}V}}[f_i\otimes\overline{Q_d}(f_1\ldots \hat{f_i}\ldots f_{d+1})].
\end{eqnarray}
\end{lemma}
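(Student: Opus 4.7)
The plan is to invoke the uniqueness of polarization. Since $f\otimes Q_d(f)$ is the tensor product of a linear and a degree-$d$ piece in $f$ and $\pi_{d,d}\otimes Id_{S^{d^2-d}V}$ is linear, $\mathfrak{B}$ is homogeneous of degree $d+1$, so its polarization $\bar{\mathfrak{B}}$ is the \emph{unique} symmetric multilinear map $(S^dV)^{\times(d+1)}\to S_{(d,d)}V\otimes S^{d^2-d}V$ (equivalently, a linear map on $S^{d+1}(S^dV)$) whose restriction to the diagonal $(f,\dots,f)$ equals $\mathfrak{B}(f)$. I therefore only need to verify that the right-hand side of the asserted formula satisfies these three properties: symmetry, multilinearity in $(f_1,\ldots,f_{d+1})$, and agreement with $\mathfrak{B}(f)$ on the diagonal.

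Multilinearity is immediate from the construction. In each summand $\pi_{d,d}\otimes Id_{S^{d^2-d}V}[f_i\otimes \overline{Q_d}(f_1\cdots\hat{f_i}\cdots f_{d+1})]$, the factor $f_i$ appears linearly, and $\overline{Q_d}$, as the complete polarization of a degree-$d$ map, is multilinear in its $d$ remaining arguments. For symmetry under a transposition $f_j\leftrightarrow f_k$, any summand with $i\notin\{j,k\}$ is fixed, because the input $f_1\cdots\hat{f_i}\cdots f_{d+1}$ lies in $S^d(S^dV)$ so $\overline{Q_d}$ does not see the swap; meanwhile the $j$-th and $k$-th summands are interchanged with one another, again using the symmetry of $\overline{Q_d}$ together with the fact that the omitted index is the one that was swapped into the outer factor. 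The total sum is therefore invariant.

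To verify the diagonal restriction I would use the defining property of the complete polarization, $\overline{Q_d}(f^d)=Q_d(f)$. Setting $f_1=\cdots=f_{d+1}=f$ turns each of the $d+1$ summands into $\pi_{d,d}\otimes Id_{S^{d^2-d}V}[f\otimes Q_d(f)]=\mathfrak{B}(f)$, and the prefactor $\frac{1}{d+1}$ then reproduces $\mathfrak{B}(f)$ exactly. Uniqueness of the polarization closes the argument. There is no real obstacle here: the whole proof is routine bookkeeping with the symmetry, multilinearity, and diagonal-restriction properties of $\overline{Q_d}$, and I expect the paper's proof to amount to little more than a brief check of these three points.
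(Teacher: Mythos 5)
Your proof is correct. The paper states this lemma without proof, treating it as a routine fact, and your verification—multilinearity, symmetry under permutation of the $f_i$, agreement with $\mathfrak{B}(f)$ on the diagonal via $\overline{Q_d}(f^d)=Q_d(f)$, and then uniqueness of the polarization—is exactly the standard argument the paper implicitly relies on.
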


\begin{example}Consider Brill's map $\bar{\mathfrak{B}}:$$S^{3}(S^2V)\rightarrow S_{(2,2)}V\otimes S^{2}V$ for $d=2$.
By Lemma \ref{Brillmapexp},
\begin{eqnarray*}
\bar{\mathfrak{B}}(e_1e_2\cdot e_1e_2\cdot e_1^2)&=&\frac{1}{3}{\pi_{2,2}\otimes Id_{S^{2}V}}[e_1^2\otimes\overline{Q_2}(e_1e_2\cdot e_1e_2)]\\
&&+\frac{2}{3}{\pi_{2,2}\otimes Id_{S^{2}V}}[e_1e_2\otimes\overline{Q_2}(e_1e_2\cdot e_1^2)]\\
&=&\frac{1}{3}{\pi_{2,2}\otimes Id_{S^{2}V}}[e_1^2\otimes(e_1^2\otimes e_2^2+e_2^2\otimes e_1^2)]\\
&&+\frac{2}{3}{\pi_{2,2}\otimes Id_{S^{2}V}}[e_1e_2\otimes (e_1^2\otimes e_1e_2+e_1e_2\otimes e_1^2)]\\
&=&\frac{1}{3}[2(e_1\wedge e_2)^2\otimes e_1^2)+\frac{2}{3}(-(e_1\wedge e_2)^2\otimes e_1^2]\\
&=&0.
\end{eqnarray*}
\begin{eqnarray*}
\bar{\mathfrak{B}}(e_1e_2\cdot e_1e_2\cdot e_1e_3)&=&\frac{1}{3}{\pi_{2,2}\otimes Id_{S^{2}V}}[e_1e_3\otimes\overline{Q_2}(e_1e_2\cdot e_1e_2)]\\
&&+\frac{2}{3}{\pi_{2,2}\otimes Id_{S^{2}V}}[e_1e_2\otimes\overline{Q_2}(e_1e_2\cdot e_1e_3)]\\
&=&\frac{1}{3}{\pi_{2,2}\otimes Id_{S^{2}V}}[e_1e_3\otimes (e_1^2\otimes e_2^2+e_2^2\otimes e_1^2)]\\
&&+\frac{2}{3}{\pi_{2,2}\otimes Id_{S^{2}V}}[e_1e_2\otimes (e_1^2\otimes e_2e_3+e_2e_3\otimes e_1^2)]\\
&=&\frac{1}{3}[2(e_1\wedge e_2)(e_1\wedge e_3)\otimes e_1^2)]+\frac{2}{3}[-(e_1\wedge e_2)(e_1\wedge e_3)\otimes e_1^2]\\
&=&0.
\end{eqnarray*}
\begin{eqnarray*}
\bar{\mathfrak{B}}(e_1e_2\cdot e_1e_2\cdot e_3^2)&=&\frac{1}{3}{\pi_{2,2}\otimes Id_{S^{2}V}}[e_3^2\otimes\overline{Q_2}(e_1e_2\cdot e_1e_2)]\\
&&+\frac{2}{3}{\pi_{2,2}\otimes Id_{S^{2}V}}[e_1e_2\otimes\overline{Q_2}(e_1e_2\cdot e_3^2)]\\
&=&\frac{1}{3}{\pi_{2,2}\otimes Id_{S^{2}V}}(e_3^2\otimes(e_1^2\otimes e_2^2+e_2^2\otimes e_1^2))\\
&&+\frac{2}{3}{\pi_{2,2}\otimes Id_{S^{2}V}}[e_1e_2\otimes(2e_1e_3\otimes e_2e_3\\
&&+2e_2e_3\otimes e_1e_3-e_3^2\otimes e_1e_2-e_1e_2\otimes e_3^2)]\\
&=&\frac{2}{3}[(e_1\wedge e_3)^2\otimes e_2^2+(e_2\wedge e_3)^2\otimes e_1^2+(e_1\wedge e_2)^2\otimes e_3^2\\
&&-2(e_1\wedge e_2)(e_1\wedge e_3)\otimes e_2e_3-2(e_1\wedge e_2)(e_2\wedge e_3)\otimes e_1e_2\\
&&-2(e_1\wedge e_3)(e_2\wedge e_3)\otimes e_1e_2].
\end{eqnarray*}
\end{example}

\subsection{Brill's map as a $GL(V)$-module map}
Consider Brill's map
$\bar{\mathfrak{B}}:S^{d+1}(S^dV)\rightarrow S_{(d,d)}V\otimes S^{d^2-d}V$.
Recall that the image of Brill's map is isomorphic to dual of the $GL(V)$-module generated by
Brill's equations. Therefore to prove Theorem \ref{Brillmodule}, we only need to prove the following theorem:
\begin{theorem}\label{Brillmap}
Assume $\dim$ $V\geq3$. Consider Brill's map
\begin{eqnarray*}
\bar{\mathfrak{B}}:S^{d+1}(S^dV)\rightarrow S_{(d,d)}V\otimes S^{d^2-d}V.
\end{eqnarray*}
Then
\begin{eqnarray*}
\im(\overline{\mathfrak{B}}) =
\begin{cases}
S_{(7,3,2)}V \ \   d=3;\\
\bigoplus_{j=2}^{d}S_{(d^2-j,d,j)}V\  d\neq3.
\end{cases}
\end{eqnarray*}

\end{theorem}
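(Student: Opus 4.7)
The plan is to exploit the $GL(V)$-equivariance of $\bar{\mathfrak{B}}$ together with the multiplicity-free Pieri decomposition
\[ S_{(d,d)}V\otimes S^{d^2-d}V=\bigoplus_{j=0}^{d}S_{(d^2-j,d,j)}V. \]
Because each summand is irreducible with multiplicity one, $\mathrm{Im}(\bar{\mathfrak{B}})$ is the direct sum of a subset of these summands, and the problem reduces to deciding, for each $j \in \{0,1,\ldots,d\}$, whether $S_{(d^2-j,d,j)}V$ lies in the image. I would first write down an explicit highest weight vector $w_j$ of each summand inside $S_{(d,d)}V\otimes S^{d^2-d}V$, built as a product of $d-a$ factors of $e_1\wedge e_2$ and $a$ factors of $e_1\wedge e_3$ in the $S_{(d,d)}V$ tensorand, tensored with a suitable monomial in $S^{d^2-d}V$, arranged so that both raising operators $E^1_2$ and $E^2_3$ annihilate the vector and the total weight is $(d^2-j,d,j)$.

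For the inclusion direction, for each $j$ in the claimed range I would exhibit a specific weight vector $v_j \in S^{d+1}(S^dV)$ of weight $(d^2-j, d, j)$, chosen as a product of $d+1$ generators from the natural monomial basis of $S^dV$ (monomials such as $e_1^d$, $e_1^{d-1}e_2$, $e_1^{d-1}e_3$, $e_1^{d-2}e_2e_3$). Using Lemma~\ref{Brillmapexp}, the multiplication rule~(\ref{b}), and the Girard expansion~(\ref{Brillexp}) of $Q_d$, I would compute $\bar{\mathfrak{B}}(v_j)$ explicitly and then project onto the $w_j$-line. The weight-$(d^2-j,d,j)$ subspace of the target is spanned by the $w_i$ for $i \leq j$ together with their images under products of lowering operators $E^2_1, E^3_1, E^3_2$; the $w_j$-coefficient can therefore be extracted either by applying enough raising operators to kill the contamination from $i<j$, or by inverting a small matrix in a coordinate basis of the weight space. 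Exhibiting one such $v_j$ with nonzero $w_j$-coefficient proves $S_{(d^2-j,d,j)}V \subset \mathrm{Im}(\bar{\mathfrak{B}})$.

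The main obstacle is the exclusion direction: showing that $S_{(d^2,d,0)}V$, $S_{(d^2-1,d,1)}V$, and (in the special case $d=3$) $S_{(6,3,3)}V$ do \emph{not} appear in $\mathrm{Im}(\bar{\mathfrak{B}})$. A single example no longer suffices; I must show that the $w_j$-component of $\bar{\mathfrak{B}}(v)$ vanishes for \emph{every} weight-$(d^2-j,d,j)$ vector $v \in S^{d+1}(S^dV)$. By equivariance, it is enough to check this on a basis of highest weight vectors of that weight in $S^{d+1}(S^dV)$, a finite-dimensional space governed by a plethysm coefficient. For $j=0,1$ this space ought to be small enough to parameterize explicitly and verify the vanishing by direct computation. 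The numerically exceptional case $d=3$, $j=3$ is where I anticipate the sharpest analysis: the vanishing should come from a specific identity combining the Girard coefficients in~(\ref{Girard formula}) with the Pieri coefficients defining $\pi_{d,d}$, and isolating why this identity holds only at $d=3$ but fails for all other $d$ looks to be the central technical challenge.
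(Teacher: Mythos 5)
Your overall framework --- exploit equivariance, use the multiplicity-free Pieri decomposition of the target, test each summand $S_{(d^2-j,d,j)}V$ separately by computing $\bar{\mathfrak{B}}$ on an explicit weight vector and projecting --- is exactly the paper's. For the inclusion direction you are on the right track; the paper takes $v_j=(e_1^{d-1}e_2)^d\cdot(e_1^{d-j}e_3^j)$, computes $\bar{\mathfrak{B}}(v_j)$ in closed form, and extracts the $\tilde v_j$-component via a unitary-invariant Hermitian inner product on $V^{\otimes(d^2+d)}$ (using orthogonality of distinct isotypic components), which is somewhat cleaner than your ``apply raising operators or invert a small matrix'' suggestion but serves the same purpose. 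One small but real error in your ansatz: the highest weight vector of $S_{(d^2-j,d,j)}V$ inside $S^d(\Lambda^2 V)\otimes S^{d^2-d}V$ is \emph{not} a sum over products of only $e_1\wedge e_2$ and $e_1\wedge e_3$; the weight-$(d^2-j,d,j)$ subspace requires $e_2\wedge e_3$ factors as well, and the correct highest weight vector is a double sum $\sum_{s,t}(-1)^t\binom{j}{s}\binom{s}{t}(e_1\wedge e_2)^{d+s-j-t}(e_1\wedge e_3)^{t}(e_2\wedge e_3)^{j-s}\otimes e_1^{d^2-d-s}e_2^t e_3^{s-t}$. With your two-wedge ansatz there is in general no solution to the system killed by $E^1_2$ and $E^2_3$.

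Where you genuinely diverge from the paper --- and where your plan would do far more work than necessary --- is the exclusion direction. You correctly diagnose that a single test vector cannot by itself rule out a summand, and you propose checking vanishing on an entire basis of highest weight vectors of the relevant weight in $S^{d+1}(S^dV)$, noting this is controlled by a plethysm coefficient. The paper avoids all of that with three separate, much cheaper arguments: (i) $j=0$ is excluded geometrically, because $S_{(d^2,d)}V$ lives already over a $2$-dimensional space but $Ch_d(\mathbb{C}^2)=\mathbb{P}S^d\mathbb{C}^2$ has no equations; (ii) $j=1$ is excluded by a short contradiction argument combining the $j=0$ exclusion, the isotypic decomposition of the test vector $v_1$, and the computed fact that $\langle\bar{\mathfrak{B}}(v_1),\tilde v_1\rangle=0$; and (iii) the exceptional case $d=3$, $j=3$ is excluded for the simplest possible reason: $S_{(6,3,3)}V$ does not occur in the plethysm $S^4(S^3V)$ at all, so there is nothing to map. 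Your anticipated ``sharpest analysis'' --- locating a special identity between Girard and Pieri coefficients that holds only at $d=3$ --- is therefore a phantom; the inner product $\langle\bar{\mathfrak{B}}(v_3),\tilde v_3\rangle$ does vanish, but the structural reason is an absent plethysm constituent, not a coefficient coincidence. You would do well to check the plethysm coefficient first before committing to a search for an algebraic identity.
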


Brill's map is a $GL(V)$-module map, therefore by Schur's lemma, the image of Brill's map is a $GL(V)$-submodule of $S_{(d,d)}V\otimes S^{d^2-d}V$.
Since we do not know the general decomposition of  $S^{d+1}(S^dV)$, it is impossible to compute the image
of each isotypic component of  $S^{d+1}(S^dV)$ directly. Fortunately, it is easy to decompose the space $S_{(d,d)}V\otimes S^{d^2-d}V$ by Pieri's rule,
i.e.
\begin{eqnarray}
S_{(d,d)}V\otimes S^{d^2-d}V=\bigoplus_{j=0}^{d}S_{(d^2-j,d,j)}V
\end{eqnarray}

Each isotypic component $S_{(d,d)}V\otimes S^{d^2-d}V$ is of multiplicity 1, so the image of Brill's map is multiplicity free.
Also, we only need to consider the modules with length no more than 3, so we only need to consider $V$ to be 3-dimensional from now on.

\subsection{Weight spaces and weight vectors of $S_{(d,d)}V\otimes S^{d^2-d}V$ and $S^{d+1}(S^dV)$}
Let $\{e_1,e_2,e_3\}$ be a basis of V.
\begin{lemma}
As a $GL_3$-module , $S^d(\wedge^2\mathbb{C}^3)$ is $S_{(d,d)}\mathbb{C}^3$.
\end{lemma}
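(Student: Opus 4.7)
The plan is to reduce the statement to a one-dimensional twist that is special to dimension three. Because $\dim V = 3$, the wedge pairing $\wedge^2 V \otimes V \to \wedge^3 V$ is non-degenerate, and it gives a canonical $GL(V)$-equivariant isomorphism $\wedge^2 V \cong V^* \otimes \wedge^3 V$. I would invoke this as the opening move. Since $\wedge^3 V$ is a one-dimensional $GL(V)$-module, taking the $d$-th symmetric power commutes with the twist, yielding
\[
S^d(\wedge^2 V) \;\cong\; S^d V^* \otimes (\wedge^3 V)^{\otimes d}.
\]

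Next I would observe that tensoring an irreducible $GL(V)$-module with a one-dimensional $GL(V)$-module preserves irreducibility, so the right-hand side is automatically irreducible and it suffices to identify its highest weight. With the standard Borel relative to the basis $\{e_1,e_2,e_3\}$, the highest weight vector of $S^d V^*$ is $(e^3)^d$ with weight $(0,0,-d)$, while $(e_1 \wedge e_2 \wedge e_3)^{\otimes d}$ has weight $(d,d,d)$. Their tensor has weight $(d,d,0)$, which is exactly the highest weight of $S_{(d,d)} V$, completing the identification.

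The only point that really requires care is the initial isomorphism $\wedge^2 V \cong V^* \otimes \wedge^3 V$, which is specific to $\dim V = 3$; everything afterward is just bookkeeping of weights. As a sanity check, one can confirm the equality at the level of characters via semistandard tableaux: those of shape $(d,d)$ on $\{1,2,3\}$ are parametrized by triples $(a,b,c)$ of non-negative integers with $a+b+c=d$ recording the number of columns of each of the three possible types, so that
\[
s_{(d,d)}(x_1, x_2, x_3) \;=\; \sum_{a+b+c=d} (x_1 x_2)^a (x_1 x_3)^b (x_2 x_3)^c \;=\; h_d(x_1 x_2,\, x_1 x_3,\, x_2 x_3),
\]
which is manifestly the character of $S^d(\wedge^2 V)$.
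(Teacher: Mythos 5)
Your argument is correct, but it is a genuinely different route from the paper's. The paper takes the most elementary path: it exhibits $(e_1\wedge e_2)^d$ as a highest weight vector of weight $(d,d)$ sitting inside $S^d(\wedge^2\mathbb{C}^3)$, so that $S_{(d,d)}\mathbb{C}^3$ is contained in the symmetric power, and then closes the gap by counting dimensions, noting that both sides have dimension $\binom{d+2}{2}$. You instead deploy the duality $\wedge^2 V \cong V^*\otimes \wedge^3 V$ that is special to $\dim V=3$, pass the twist through the symmetric power to get $S^d(\wedge^2 V)\cong S^dV^*\otimes(\wedge^3 V)^{\otimes d}$, and observe that a one-dimensional twist of an irreducible is irreducible; irreducibility then comes for free and all that remains is to match highest weights, which you do correctly (weight $(0,0,-d)$ plus $(d,d,d)$ gives $(d,d,0)$). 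Both approaches lean on $\dim V=3$: yours explicitly via the pairing $\wedge^2 V\otimes V\to\wedge^3 V$, the paper's implicitly in the equality of dimensions. The trade-off is that your argument is conceptually cleaner (no dimension count, and it explains \emph{why} the module is irreducible rather than merely confirming it), whereas the paper's is self-contained and uses only the bare mechanics of highest weight vectors already set up in \S 2.3. Your closing character identity $s_{(d,d)}(x_1,x_2,x_3)=h_d(x_1x_2,x_1x_3,x_2x_3)$ is a correct and pleasant independent verification, essentially a combinatorial repackaging of the paper's dimension count.
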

\begin{proof}
First, since $(e_1\wedge e_2)^d\in S^d(\wedge^2\mathbb{C}^3)$ is a highest weight vector with weight $(d,d)$, so $S_{(d,d)}\mathbb{C}^3\subset S^d(\wedge^2\mathbb{C}^3)$. Second,
$\dim\ S_{d,d}\mathbb{C}^3= \dim\ S^d(\wedge^2\mathbb{C}^3)=\binom{d+2}{2}$. The result follows.
\end{proof}
\begin{definition}
Given an integer $j$ such that $j\in\{0,\cdots,d\}$.
Define the weight space $W_j$$\subset$$S^{d+1}(S^dV)$ to be the set of all the degree $d+1$ homogenous polynomials on $S^dV^*$ such that each monomial has weight $(d^2-j,d,j)$ .\\
Define the weight space $\widetilde{W_j}$$\subset$$S_{(d,d)}V\otimes S^{d^2-d}V=S^d(\wedge^2V)\otimes S^{d^2-d}V$ to be the set of all the weight vectors in $S^d(\wedge^2V)\otimes S^{d^2-d}V$ whose weights are $(d^2-j,d,j)$.
\end{definition}
\begin{lemma}\label{basis}
The weight space $\widetilde{W_j}$$\subset$$S_{(d,d)}V\otimes S^{d^2-d}V= S^d(\wedge^2V)\otimes S^{d^2-d}V$ has indeed
basis$$\{(e_1\wedge e_2)^{d+s-j-t}(e_1\wedge e_3)^t(e_2\wedge e_3)^{j-s}\otimes e_1^{d^2-d-s}e_2^te_3^{s-t}\}_{0\leq s \leq j,0\leq t \leq s}.$$
\end{lemma}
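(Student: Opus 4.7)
The plan is to compute the weight space directly in the natural monomial bases of both tensor factors and solve the resulting linear system on the exponents. Since $\dim V = 3$, the space $\wedge^2 V$ has basis $e_1\wedge e_2,\ e_1\wedge e_3,\ e_2\wedge e_3$ with torus weights $(1,1,0)$, $(1,0,1)$, $(0,1,1)$ respectively, so $S^d(\wedge^2 V)$ has the monomial basis
$\{(e_1\wedge e_2)^a(e_1\wedge e_3)^b(e_2\wedge e_3)^c : a+b+c=d\}$;
likewise $S^{d^2-d}V$ has the monomial basis $\{e_1^p e_2^q e_3^r : p+q+r = d^2-d\}$. Tensor products of these basis elements form a weight basis of the tensor product $S^d(\wedge^2 V)\otimes S^{d^2-d}V$, and the pure tensor indexed by $(a,b,c,p,q,r)$ has weight $(a+b+p,\; a+c+q,\; b+c+r)$.

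Next I would extract $\widetilde{W_j}$ by imposing that this weight equal $(d^2-j,\,d,\,j)$, which produces the linear system
\begin{align*}
a+b+c &= d, \qquad p+q+r = d^2-d,\\
a+b+p &= d^2-j, \qquad a+c+q = d, \qquad b+c+r = j.
\end{align*}
The five equations have rank $4$ in the six unknowns, so one expects a two-parameter family of integer solutions. Setting $t := b$ and $s := b+r$ (equivalently, reading $c = j-s$ from the third weight equation and $r = s-t$ from the definition of $s$), the remaining exponents are forced: $a = d+s-j-t$, $q = t$, and $p = d^2-d-s$. This reproduces the monomials in the displayed list exactly.

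Finally I would verify that the stated ranges $0 \leq s \leq j$ and $0 \leq t \leq s$ are precisely the nonnegativity constraints on the exponents: $c\geq 0$ gives $s\leq j$, $r\geq 0$ gives $t\leq s$, while $a = d+s-j-t \geq 0$ and $p = d^2-d-s \geq 0$ follow automatically from $t\leq s\leq j\leq d$ together with $d\geq 2$. Distinct pairs $(s,t)$ yield distinct basis monomials in the tensor product, so linear independence of the proposed basis is immediate, and every weight-$(d^2-j,d,j)$ monomial is enumerated, proving the claim.

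There is no real obstacle here; the proof is essentially pure bookkeeping on the weight diophantine system. The one point that requires care is the choice of the two free parameters: the most tempting choice is to take $(b,c)$ directly, but the substitution $s = b+r$, $t = b$ is what matches the exponents in the lemma on the nose, and writing it this way avoids having to algebraically re-express the answer at the end.
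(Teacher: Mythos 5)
Your proof is correct and follows the same route as the paper's: enumerate the monomial weight basis of $S^d(\wedge^2 V)\otimes S^{d^2-d}V$, impose that the weight equals $(d^2-j,d,j)$, and solve the resulting linear system for two free parameters $(s,t)$. The only cosmetic differences are the naming of the intermediate exponents and that you spell out the rank count, the nonnegativity checks, and linear independence, which the paper leaves implicit (the paper also has a sign/typo in its displayed system, writing $a_1+a_2+a_4=0$ where the intended condition is $a_1+a_2+a_4=j$; your version avoids that).
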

\begin{proof}
$S^d(\wedge^2V)\otimes S^{d^2-d}V$ has a indeed basis $$\{(e_1\wedge e_2)^{d-a_1-a_2}(e_1\wedge e_3)^{a_1}(e_2\wedge e_3)^{a_2}\otimes e_1^{d^2-d-a_3-a_4}e_2^{a_3}e_3^{a_4}\}_{0\leq a_1+a_2\leq d,0\leq a_3+a_4 \leq d^2-d}.$$ Let $v\in W_j$ be a basis vector of $S^d(\wedge^2V)\otimes S^{d^2-d}V$. Then
\begin{eqnarray}
\left\{
\begin{aligned}
a_1+a_2+a_4=0\\
a_1-a_3=0.\\
\end{aligned}
\right.
\end{eqnarray}
Let $a_3=t, a_3+a_4=s$, then $0\leq s \leq j,0\leq t \leq s$ and $v=(e_1\wedge e_2)^{d+s-j-t}(e_1\wedge e_3)^t(e_2\wedge e_3)^{j-s}\otimes e_1^{d^2-d-s}e_2^te_3^{s-t}$.
\end{proof}
\begin{lemma}\label{hwvectors}The highest weight vector $\tilde{v_j}\in S_{(d^2-j,d,j)}V\subset S_{(d,d)}V\otimes S^{d^2-d}V= S^d(\wedge^2V)\otimes S^{d^2-d}V$ is
\begin{eqnarray}
\sum_{s=0}^{j}\sum_{t=0}^{s}(-1)^t\binom{j}{s}\binom{s}{t}(e_1\wedge e_2)^{d+s-j-t}(e_1\wedge e_3)^{t}(e_2\wedge e_3)^{j-s}\otimes e_1^{d^2-d-s}e_2^te_3^{s-t}.
\end{eqnarray}
\begin{proof}
By Lemma \ref{basis}, write
\begin{eqnarray*}
\tilde{v_j}=\sum_{s=0}^{j}\sum_{t=0}^{s}a_{st}(e_1\wedge e_2)^{d+s-j-t}(e_1\wedge e_3)^{t}(e_1\wedge e_2)^{j-s}\otimes e_1^{d^2-d-s}e_2^te_3^{s-t}.
 \end{eqnarray*}
 Apply raising operators $E_{2}^{1}$ and $ E_3^2$ on $\tilde{v_j}$,
 \begin{eqnarray*}
E_{2}^{1}\tilde{v_j}&=&\sum_{s=0}^{j}\sum_{t=0}^{s}a_{st}(j-s)(e_1\wedge e_2)^{d+s-j-t}(e_1\wedge e_3)^{t+1}(e_1\wedge e_2)^{j-s-1}\otimes e_1^{d^2-d-s}e_2^te_3^{s-t}.\\
&&+\sum_{s=0}^{j}\sum_{t=0}^{s}ta_{st}(e_1\wedge e_2)^{d+s-j-t}(e_1\wedge e_3)^{t}(e_2\wedge e_3)^{j-s-1}\otimes e_1^{d^2-d-s+1}e_2^{t-1}e_3^{s-t}\\
&=&\sum_{s=0}^{j-1}\sum_{t=1}^{s+1}(ta_{s+1,t}+(j-s)a_{s,t-1})(e_1\wedge e_2)^{d+s-j-t}(e_1\wedge e_3)^{t}(e_2\wedge e_3)^{j-s-1}\\
&&\otimes e_1^{d^2-d-s}e_2^{t-1}e_3^{s-t}.\\
\end{eqnarray*}
and
 \begin{eqnarray*}
E_{3}^{2}\tilde{v_j}&=&\sum_{s=0}^{j}\sum_{t=0}^{s}ta_{st}(e_1\wedge e_2)^{d+s-j-t+1}(e_1\wedge e_3)^{t-1}(e_1\wedge e_2)^{j-s}\otimes e_1^{d^2-d-s}e_2^te_3^{s-t}\\
&&+\sum_{s=0}^{j}\sum_{t=0}^{s}(s-t)a_{st}(e_1\wedge e_2)^{d+s-j-t}(e_1\wedge e_3)^{t}(e_2\wedge e_3)^{j-s}\otimes e_1^{d^2-d-s}e_2^{t-+}e_3^{s-t-1}\\
&=&\sum_{s=1}^{j}\sum_{t=1}^{s}(ta_{s,t}+(s-t+1)a_{s,t-1})(e_1\wedge e_2)^{d+s-j-t+1}(e_1\wedge e_3)^{t-1}(e_2\wedge e_3)^{j-s}\\
&&\otimes e_1^{d^2-d-s}e_2^{t}e_3^{s-t}.\\
\end{eqnarray*}
 we get two systems of equations of $\{a_{st}\}_{0\leq s \leq j,0\leq t \leq s}$,
\begin{eqnarray}
\left\{
\begin{aligned}
ta_{s+1,t}+(j-s)a_{s,t-1}=0\\
ta_{s,t}+(s-t+1)a_{s,t-1}=0
\end{aligned}
\right.
\end{eqnarray}
 and then solve for $\{a_{st}\}_{0\leq s \leq j,0\leq t \leq s}$, we get a unique solution $a_{s,t}=(-1)^t\binom{j}{s}\binom{s}{t}$ up to scale.
\end{proof}
\end{lemma}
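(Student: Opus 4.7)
The plan is to use the standard characterization of a highest weight vector: it is a weight vector of weight $(d^2-j,d,j)$ that is annihilated by the simple raising operators $E^1_2$ and $E^2_3$. By Lemma \ref{basis}, the weight space $\widetilde{W_j}$ has an explicit basis indexed by pairs $(s,t)$ with $0 \leq t \leq s \leq j$, so I write the candidate
\begin{eqnarray*}
\tilde{v_j} \;=\; \sum_{s=0}^{j}\sum_{t=0}^{s} a_{s,t}\,(e_1\wedge e_2)^{d+s-j-t}(e_1\wedge e_3)^{t}(e_2\wedge e_3)^{j-s}\otimes e_1^{d^2-d-s}e_2^t e_3^{s-t},
\end{eqnarray*}
so that the problem reduces to determining the scalars $a_{s,t}$.

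First I would apply $E^1_2$ and $E^2_3$ termwise, using the Leibniz rule for the $\mathfrak{gl}(V)$-action on a symmetric power of wedges tensored with a symmetric power of $V$. Each raising operator contributes two pieces to each basis vector, one from differentiating the wedge factor and one from differentiating the symmetric monomial factor; after re-indexing and collecting coefficients of basis vectors in the target weight space, the conditions $E^1_2\tilde{v_j}=0$ and $E^2_3\tilde{v_j}=0$ yield two two-term linear recurrences, of shape $t\,a_{s+1,t} + (j-s)\,a_{s,t-1} = 0$ and $t\,a_{s,t} + (s-t+1)\,a_{s,t-1} = 0$ respectively.

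Next I would solve the system. Regarded as a one-step iteration in $t$ at fixed $s$, the second recurrence telescopes immediately to $a_{s,t} = (-1)^t\binom{s}{t} a_{s,0}$. Substituting this into the first recurrence and using the elementary identity $t\binom{s+1}{t} = (s+1)\binom{s}{t-1}$ reduces the first family to $(s+1)\,a_{s+1,0} = (j-s)\,a_{s,0}$, which iterates to $a_{s,0} = \binom{j}{s} a_{0,0}$. Combining the two expressions gives $a_{s,t} = (-1)^t \binom{j}{s}\binom{s}{t} a_{0,0}$, which is the stated formula after fixing the overall scaling $a_{0,0}=1$.

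Finally, uniqueness up to scale is a representation-theoretic input: by Pieri's rule, $S_{(d^2-j,d,j)}V$ appears in $S_{(d,d)}V \otimes S^{d^2-d}V$ with multiplicity one, so its highest weight line is one-dimensional, and any non-zero solution of the two recurrences must agree with the above up to scale. The main technical obstacle I anticipate is the bookkeeping in the raising-operator step, namely correctly re-indexing the double sums and tracking the coefficients $(j-s)$, $t$, and $(s-t+1)$ produced by the Leibniz rule on both the wedge and symmetric factors, together with handling the boundary cases at $t=0$, $s=0$, and $s=j$ where certain binomials or exponents degenerate; once these are handled cleanly, solving the recurrences is a routine double induction and verifying the proposed closed form is a short check of two binomial identities.
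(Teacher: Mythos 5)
Your proposal follows the same route as the paper: it writes $\tilde{v_j}$ in the basis of Lemma \ref{basis} with unknown coefficients $a_{s,t}$, applies the simple raising operators $E^1_2$ and $E^2_3$ to obtain the two two-term recurrences, and solves them to get $a_{s,t} = (-1)^t\binom{j}{s}\binom{s}{t}$ up to scale. The only difference is that you spell out the telescoping solution and the uniqueness-via-multiplicity-one argument explicitly, where the paper leaves these as a brief \lq\lq solve and get a unique solution\rq\rq; your extra detail is correct and harmless.
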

Since Brill's map is a $GL(V)$-module map, we only need to check whether $\tilde{v_j}$ is in the image of Brill's map.

For convenience, write
\begin{eqnarray}
S^{d+1}(S^dV)=A_d \bigoplus(\bigoplus_{j=0}^{d}{S_{(d^2-j,d,j)}V}^{\oplus m_j}).
\end{eqnarray}
Where $A_d$ is the direct sum of the isotypic components of $S^{d+1}(S^dV)$ other than $S_{(d^2-j,d,j)}V$ for $j=0,1,\cdots,d$,
which is certainly in the kernel of Brill's map.\\

The idea is to take $v_j=(e_1^{
d-1}e_2)^d(e_1^{d-j}{e_3}^j)\in W_j$ , compute $\bar{\mathfrak{B}}(v_j)$, and see whether the projection of
$\bar{\mathfrak{B}}(v_j)$ to $S_{(d^2-j,d,j)}V\subset S_{(d,d)}V\otimes S^{d^2-d}V$ is 0.

\begin{proposition}\label{test1}
If the projection of $\bar{\mathfrak{B}}(v_j)$ to $S_{(d^2-j,d,j)}V\subset S_{(d,d)}V\otimes S^{d^2-d}V$ is not 0, then
$\tilde{v_j}$ is in the image of Brill's map, therefore $S_{(d^2-j,d,j)}V\subset S_{(d,d)}V\otimes S^{d^2-d}V$ is
in the image of Brill's map.
\end{proposition}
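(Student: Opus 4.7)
The plan is to combine the multiplicity-freeness of the Pieri decomposition with Schur's lemma; once the right weight bookkeeping is in place the result is essentially formal. First I would note that because
\[
S_{(d,d)}V\otimes S^{d^2-d}V=\bigoplus_{i=0}^{d}S_{(d^2-i,d,i)}V
\]
has each isotypic component of multiplicity one, there is a canonical $GL(V)$-equivariant projection $\pi_j:S_{(d,d)}V\otimes S^{d^2-d}V\to S_{(d^2-j,d,j)}V$ onto the $j$-th summand. Since $\bar{\mathfrak{B}}$ is a $GL(V)$-module map, $\im(\bar{\mathfrak{B}})$ is a $GL(V)$-submodule of this multiplicity-free target, hence by Schur of the form $\bigoplus_{i\in I}S_{(d^2-i,d,i)}V$ for some $I\subseteq\{0,\dots,d\}$.

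Next I would use weight equivariance to pin down $\pi_j(\bar{\mathfrak{B}}(v_j))$. The vector $v_j=(e_1^{d-1}e_2)^d(e_1^{d-j}e_3^j)$ lies in $W_j$, i.e.\ it is a weight vector of weight $(d^2-j,d,j)$, so $\bar{\mathfrak{B}}(v_j)$ is of the same weight, and therefore so is $\pi_j(\bar{\mathfrak{B}}(v_j))\in S_{(d^2-j,d,j)}V$. But $(d^2-j,d,j)$ is precisely the highest weight of $S_{(d^2-j,d,j)}V$, and highest-weight spaces are one-dimensional, spanned by $\tilde v_j$ as computed in Lemma~\ref{hwvectors}. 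Consequently $\pi_j(\bar{\mathfrak{B}}(v_j))=c\,\tilde v_j$ for some scalar $c\in\mathbb{C}$, and the hypothesis of the proposition is precisely $c\neq 0$.

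To finish I would rule out $j\notin I$: if $j\notin I$, then $\pi_j$ restricted to every summand $S_{(d^2-i,d,i)}V$ with $i\in I$ is zero by Schur, so $\pi_j$ vanishes identically on $\im(\bar{\mathfrak{B}})$, contradicting $c\neq 0$. Hence $j\in I$, so $S_{(d^2-j,d,j)}V\subseteq \im(\bar{\mathfrak{B}})$; in particular $\tilde v_j\in \im(\bar{\mathfrak{B}})$, which gives both assertions of the proposition.

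The genuine obstacle is not in this proposition itself, which is a clean formal consequence of Schur plus multiplicity one, but in its intended application: one must still expand $\bar{\mathfrak{B}}(v_j)$ explicitly via Lemma~\ref{Brillmapexp} and formula~\eqref{defineQ} and read off the coefficient of the $(s,t)=(0,0)$ term $(e_1\wedge e_2)^{d-j}(e_2\wedge e_3)^{j}\otimes e_1^{d^2-d}$ of $\tilde v_j$, verifying that it is nonzero. That coefficient-matching computation is where the case distinction between generic $j\geq 2$ and small $j=0,1$ will emerge, and where the anomaly $d=3$ in Theorem~\ref{Brillmap} will be detected.
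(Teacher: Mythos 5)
Your proof is correct and takes essentially the same approach as the paper: both rest on $GL(V)$-equivariance of $\bar{\mathfrak{B}}$, the multiplicity-free Pieri decomposition of the target, Schur's lemma, and the observation that $v_j$ is a weight vector of weight $(d^2-j,d,j)$, the highest weight of $S_{(d^2-j,d,j)}V$. The only cosmetic difference is that the paper decomposes $v_j$ across the isotypic components of the source $S^{d+1}(S^dV)$ (writing $v_j=v_{j1}+v_{j2}+v_{j3}$ and applying Schur to each piece), while you decompose $\im(\bar{\mathfrak{B}})$ inside the multiplicity-free target and invoke weight-preservation directly, which has the minor advantage of sidestepping the unknown multiplicities $m_j$ in $S^{d+1}(S^dV)$ altogether.
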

\begin{proof}
Write $v_j=v_{j1}+v_{j2}+v_{j3}$, where $v_{j1}\in A_d$, $v_{j2}\in \bigoplus_{k=0}^{j-1}{S_{(d^2-k,d,k)}V}^{\oplus m_k}$ ,
and $v_{j3}\in {S_{(d^2-j,d,j)}V}^{\oplus m_j}$ is a highest weight vector. By Schur's Lemma, $\bar{\mathfrak{B}}(v_{j1})=0$,
$\bar{\mathfrak{B}}(v_{j2})\in \bigoplus_{k=0}^{j-1}S_{(d^2-k,d,k)}V$, and $\bar{\mathfrak{B}}(v_{j3})\in S_{(d^2-j,d,j)}V$,
therefore the projection of
$\bar{\mathfrak{B}}(v_j)$ to $S_{(d^2-j,d,j)}V\subset S_{(d,d)}V\otimes S^{d^2-d}V$  is exactly $\bar{\mathfrak{B}}(v_{j3})$,
by Schur's Lemma, if it is not 0, it is ${\tilde{v_j}}$ (see Lemma \ref{hwvectors}) up to a constant.
\end{proof}
\subsection{Computing $\bar{\mathfrak{B}}(v_j)$ }
Brill's map is very complicated to compute in general. Fortunately, we are able to compute $\bar{\mathfrak{B}}(v_j)$.
\begin{eqnarray*}
\bar{\mathfrak{B}}(v_j)&=&\bar{\mathfrak{B}}((e_1^{d-1}e_2)^d\cdot(e_1^{d-j}{e_3}^j))\\
&=&\frac{1}{d+1}{\pi_{d,d}\otimes Id_{S^{d^2-d}V}}((e_1^{d-j}{e_3}^j)\otimes\overline{Q_d}((e_1^{d-1}e_2)^d)\\
&&+\frac{d}{d+1}{\pi_{d,d}\otimes Id_{S^{d^2-d}V}}((e_1^{d-1}e_2)\otimes\overline{Q_d}((e_1^{d-1}e_2)^{d-1}\cdot(e_1^{d-j}{e_3}^j))\\
&=&\frac{1}{d+1}{\pi_{d,d}\otimes Id_{S^{d^2-d}V}}((e_1^{d-j}{e_3}^j)\otimes Q_d(e_1^{d-1}e_2))\\
&&+\frac{d}{d+1}{\pi_{d,d}\otimes Id_{S^{d^2-d}V}}((e_1^{d-1}e_2)\otimes\overline{Q_d}((e_1^{d-1}e_2)^{d-1}\cdot(e_1^{d-j}{e_3}^j)))
\end{eqnarray*}
\\\\
First, I compute and ${\pi_{d,d}\otimes Id_{S^{d^2-d}V}}((e_1^{d-j}{e_3}^j)\otimes Q_d(e_1^{d-1}e_2))$.
By Lemma \ref{Qlinear},
\begin{proposition}\label{firstpart} ${\pi_{d,d}\otimes Id_{S^{d^2-d}V}}((e_1^{d-j}{e_3}^j)\otimes Q_d(e_1^{d-1}e_2))$ is
\begin{eqnarray}
\left\{
\begin{aligned}
&{d!}(e_1\wedge e_2)^{d-j}(e_3\wedge e_2)^{j}\otimes e_1^{d^2-d}\  \ j\neq d\\
&{d!}(e_3\wedge e_2)^{d}\otimes(e_1^{d^2-d})+(d-1)d!\otimes(e_3\wedge e_1)^d\otimes e_1^{d^2-2d}e_2^d\ \ j=d
\end{aligned}
\right.
\end{eqnarray}
\end{proposition}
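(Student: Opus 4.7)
The plan is to apply Lemma~\ref{Qlinear} to $e_1^{d-1}e_2$, viewed as a product of $d$ linear forms ($d-1$ copies of $e_1$ and one copy of $e_2$), and then evaluate $\pi_{d,d}$ on each resulting summand. Collecting the $d-1$ identical contributions coming from the repeated $e_1$-factors yields
\[ Q_d(e_1^{d-1}e_2) \;=\; (d-1)\,e_1^d \otimes e_1^{d^2-2d}e_2^d \;+\; e_2^d \otimes e_1^{d^2-d}. \]
After tensoring with $e_1^{d-j}e_3^j$, the task reduces to computing $\pi_{d,d}\bigl((e_1^{d-j}e_3^j)\otimes e_2^d\bigr)$ and $\pi_{d,d}\bigl((e_1^{d-j}e_3^j)\otimes e_1^d\bigr)$ and then appending the $S^{d^2-d}V$ tails.

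The key simplification is that when the second tensor factor in \eqref{a} is a pure power $m^d$, every permutation $\sigma\in\mathfrak{S}_d$ contributes the same product, so
\[ \pi_{d,d}\bigl((l_1\cdots l_d)\otimes m^d\bigr) \;=\; d!\,(l_1\wedge m)(l_2\wedge m)\cdots(l_d\wedge m). \]
Taking $m=e_2$ gives $d!\,(e_1\wedge e_2)^{d-j}(e_3\wedge e_2)^{j}$, while taking $m=e_1$ gives $d!\,(e_1\wedge e_1)^{d-j}(e_3\wedge e_1)^{j}$. The latter vanishes as soon as $d-j\ge 1$ (because of the $e_1\wedge e_1$ factor) and collapses to $d!\,(e_3\wedge e_1)^d$ exactly when $j=d$.

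Assembling these contributions produces the two cases in the statement: for $j<d$ only the $e_2^d$-summand survives, giving $d!\,(e_1\wedge e_2)^{d-j}(e_3\wedge e_2)^j\otimes e_1^{d^2-d}$; for $j=d$ both summands survive and add to $d!\,(e_3\wedge e_2)^d\otimes e_1^{d^2-d}+(d-1)\,d!\,(e_3\wedge e_1)^d\otimes e_1^{d^2-2d}e_2^d$. There is no conceptual obstacle here; the only points demanding attention are the combinatorial factor $(d-1)$ counting the repeated $e_1$-factors when invoking Lemma~\ref{Qlinear}, and the case split $j<d$ versus $j=d$ that dictates exactly when one of the two $Q_d$-summands is annihilated by $\pi_{d,d}$.
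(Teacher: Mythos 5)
Your argument is correct and is precisely the one the paper intends: the paper proves this proposition simply by invoking Lemma~\ref{Qlinear} applied to $e_1^{d-1}e_2$ and then unwinding the definition \eqref{a} of $\pi_{d,d}$, which is exactly what you do. Your observation that $\pi_{d,d}\bigl((l_1\cdots l_d)\otimes m^d\bigr)=d!\,(l_1\wedge m)\cdots(l_d\wedge m)$ and that the $e_1^d$-summand is annihilated by the $(e_1\wedge e_1)$ factor unless $j=d$ correctly supplies the computational details the paper leaves implicit.
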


Next, I compute ${\pi_{d,d}\otimes Id_{S^{d^2-d}V}}((e_1^{d-1}e_2)\otimes\overline{Q_d}((e_1^{d-1}e_2)^{d-1}(e_1^{d-j}{e_3}^j)))$.
\begin{lemma}\label{divisible}
If $h\in S^dV$ is divisible by $e_1^2$,  then  $\pi_{d,d}(h,e_1^{d-1}e_2)=0$.
\end{lemma}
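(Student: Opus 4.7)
The plan is to reduce the statement to a direct combinatorial calculation using the explicit formula for $\pi_{d,d}$ on decomposable symmetric tensors. Since the image "$e_1^2 \cdot S^{d-2}V \subset S^dV$" is spanned by products of linear forms each containing $e_1$ as a factor at least twice, and since $\pi_{d,d}$ is defined on products of linear forms and extended linearly, it suffices by linearity in the first argument to prove the vanishing when $h$ itself has the special form $h = e_1 \cdot e_1 \cdot l_3 \cdots l_d$ for arbitrary linear forms $l_3,\dots,l_d \in V$.

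Next I would expand $e_1^{d-1}e_2 = m_1 \cdots m_d$ as a product of linear forms by setting $m_1 = m_2 = \cdots = m_{d-1} = e_1$ and $m_d = e_2$, and plug both factorizations into the defining formula \eqref{a} for $\pi_{d,d}$. This gives
\[
\pi_{d,d}(h,\, e_1^{d-1}e_2) \;=\; \sum_{\sigma \in \mathfrak{S}_d} (e_1 \wedge m_{\sigma(1)}) \cdot (e_1 \wedge m_{\sigma(2)}) \cdot \prod_{i=3}^{d}(l_i \wedge m_{\sigma(i)}).
\]
The decisive observation is that among $m_1,\dots,m_d$ there is exactly one index, namely $d$, for which $e_1 \wedge m_i$ is nonzero, since $m_i = e_1$ for all $i \neq d$. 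Consequently, for any permutation $\sigma$, at most one of $\sigma(1),\sigma(2)$ can equal $d$, so at least one of the first two wedge factors $(e_1 \wedge m_{\sigma(1)})$, $(e_1 \wedge m_{\sigma(2)})$ is zero. Hence every summand vanishes and the total sum is $0$.

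This handles the decomposable case, and the general case follows by linearity since $S^{d-2}V$ is spanned by products of linear forms. There is no real obstacle here: the argument is purely combinatorial and uses only the pigeonhole observation on the two repeated $e_1$ factors of $h$. The same technique will presumably be the workhorse later, each time one needs to exploit that $e_1^{d-1}e_2$ has only one linear factor different from $e_1$, and divisibility of $h$ by $e_1^2$ (or other powers of $e_1$) creates more "$e_1 \wedge e_1$" collisions than $m_d = e_2$ can absorb.
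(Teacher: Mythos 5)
Your proof is correct. The paper states Lemma~\ref{divisible} without proof, presumably regarding it as immediate, and the argument you supply is exactly the natural one: expand $e_1^{d-1}e_2$ as $m_1\cdots m_d$ with $m_1=\cdots=m_{d-1}=e_1$, $m_d=e_2$, reduce by linearity in the first slot to $h=e_1\cdot e_1\cdot l_3\cdots l_d$, and observe via formula~\eqref{a} that for every $\sigma\in\mathfrak{S}_d$ at most one of $\sigma(1),\sigma(2)$ can hit the single index $d$, so one of the first two wedge factors is $e_1\wedge e_1=0$, killing the whole summand. The only thing worth flagging is that you should state explicitly that $\pi_{d,d}$ (defined first on simple tensors and then extended linearly) is indeed linear in each slot, which licenses the reduction; you do gesture at this, and the verification is routine, so no gap remains.
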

\begin{lemma}\label{Brillpor}
For any $f,g\in S^dV$, by polarizing \eqref{Brillexp},
\begin{eqnarray*}
\overline{Q_d}(f^{d-1}g)&=&(-1)^d\sum_{i_1+2i_2+\cdots+di_d=d}d(-1)^{i_1+i_2+\cdots+i_d}\frac{(i_1+i_2+\cdots+i_d-1)!}{i_1!\cdots i_d!}\\
& &(\sum_{s=1}^d\frac{i_s}{d}(\prod_{j=1,j\neq s}^{d}f_{j,d-j}^{i_j})\cdot(f_{s,d-s}^{i_s-1}g_{s,d-s})\cdot(1\otimes f^{d-(i_1+\cdots+i_d)}))\\
&&+(\frac{d-(i_1+\cdots+i_d)}{d}(\prod_{j=1}^{d}f_{j,d-j}^{i_j})\cdot(1\otimes f^{d-(i_1+\cdots+i_d)-1}g)).
\end{eqnarray*}
\end{lemma}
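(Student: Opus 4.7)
The idea is to invoke the polarization identity for a homogeneous polynomial map of degree $d$: if $P:W\to U$ is such a map and $\bar P$ is its symmetric polarization, then
\[\bar P(f^{d-1}g) \;=\; \frac{1}{d}\,\frac{d}{dt}\bigg|_{t=0} P(f+tg).\]
This follows from the binomial expansion $P(f+tg)=\sum_{k=0}^d \binom{d}{k}\,t^k\, \bar P(f^{d-k}g^k)$ and reading off the coefficient of $t$. Applying this identity with $P=Q_d$ reduces the problem to computing $\frac{d}{dt}\big|_{t=0} Q_d(f+tg)$ from the closed-form expression \eqref{Brillexp}.

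The key structural observation is that the map $f\mapsto f_{j,d-j}$ from $S^dV$ to $S^jV\otimes S^{d-j}V$ is linear, so $(f+tg)_{j,d-j}=f_{j,d-j}+t\,g_{j,d-j}$, and the multiplication on $S^\bullet V\otimes S^\bullet V$ defined in \eqref{b} is bilinear. With these two facts, each summand of $Q_d(f+tg)$ takes the form
\[d(-1)^{d+i_1+\cdots+i_d}\frac{(i_1+\cdots+i_d-1)!}{i_1!\cdots i_d!}\prod_{j=1}^d \bigl(f_{j,d-j}+t\,g_{j,d-j}\bigr)^{i_j}\cdot\bigl(1\otimes (f+tg)^{d-(i_1+\cdots+i_d)}\bigr),\]
and the product rule at $t=0$ produces exactly two kinds of contributions: differentiating one of the $\prod_j$-factors yields $\sum_{s=1}^d i_s\, f_{s,d-s}^{\,i_s-1}g_{s,d-s}\prod_{j\neq s}f_{j,d-j}^{\,i_j}$ paired with $1\otimes f^{d-(i_1+\cdots+i_d)}$, while differentiating the second tensor factor yields $\prod_j f_{j,d-j}^{\,i_j}$ paired with $(d-(i_1+\cdots+i_d))(1\otimes f^{d-(i_1+\cdots+i_d)-1}g)$. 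Dividing by $d$ (from the polarization identity) converts the combinatorial prefactors $i_s$ and $d-(i_1+\cdots+i_d)$ into the stated $i_s/d$ and $(d-(i_1+\cdots+i_d))/d$, and the sign $(-1)^{d+i_1+\cdots+i_d}$ factors as $(-1)^d(-1)^{i_1+\cdots+i_d}$, matching the displayed form.

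There is no real obstacle here; the argument is bookkeeping once the derivative version of polarization and the linearity of $f\mapsto f_{j,d-j}$ are in place. The main care needed is with boundary conventions: the identification of $f_{d,0}$ with $f\in S^dV\otimes S^0V$, the vanishing of the $s$-th term whenever $i_s=0$, and the vanishing of the second contribution whenever $d-(i_1+\cdots+i_d)=0$, all of which are built automatically into the prefactors $i_s/d$ and $(d-(i_1+\cdots+i_d))/d$.
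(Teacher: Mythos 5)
Your proof is correct and is the natural way to make precise the paper's terse ``by polarizing \eqref{Brillexp}'': you use the standard equivalence between the alternating-sum polarization \eqref{por} and the directional-derivative form $\bar P(f^{d-1}g)=\tfrac{1}{d}\tfrac{d}{dt}\big|_{t=0}P(f+tg)$ (which relies on $\bar P(w^d)=P(w)$, an easy consequence of \eqref{por} via the identity $\sum_{m=0}^{d}\binom{d}{m}(-1)^{d-m}m^d=d!$), and then the product rule applied to \eqref{Brillexp} after noting that $f\mapsto f_{j,d-j}$ is linear. This matches the paper's intended argument, including the boundary cases handled automatically by the prefactors $i_s/d$ and $(d-(i_1+\cdots+i_d))/d$.
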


Now I use Lemma \ref{Brillpor} to compute $\overline{Q_d}((e_1^{d-1}e_2)^{d-1}\cdot(e_1^{d-j}{e_3}^j))$. By lemma \ref{divisible}, terms of  $\overline{Q_d}((e_1^{d-1}e_2)^{d-1}\cdot(e_1^{d-j}{e_3}^j))$ whose first components are
divisible by $e_1^2$ are killed by $(e_1^{d-1}e_2)$ via $\pi_{d,d}$. Therefore,
by Lemma \ref{Brillpor}, given $i_1,\cdots,i_d$ with $i_1+2i_2+\cdots d i_d=d$, we need $\#\{j\geq3|i_j\geq1\}\leq 1$ so that the corresponding terms will not vanish. There are 2 possibilities, either some $i_s=1$ for some $s\geq3$
or $i_s\equiv0$ for all $s\geq3 $. More specifically, there are five cases for which $\overline{Q_d}((e_1^{d-1}e_2)^{d-1}(e_1^{d-j}{e_3}^j))$ may not vanish:
\begin{enumerate}
\item $i_s=1$ for some $s\geq3$ and $ i_2=0, i_1=d-s$;
\item $i_s=1$ for some $s\geq3$ and $ i_2=1, i_1=d-s-2$;
\item $i_s\equiv0$ for all $s\geq3$ and  $ i_2=0, i_1=d$;
\item $i_s\equiv0$ for all $s\geq3$ and $ i_2=1, i_1=d-2$;
\item $i_s\equiv0$ for all $s\geq3$ and $ i_2=2, i_1=d-4$.
\end{enumerate}

I use the symbol $\equiv$ to omit those terms of  $\overline{Q_d}((e_1^{d-1}e_2)^{d-1}\cdot(e_1^{d-j}{e_3}^j))$ whose first components
are divisible by $e_1^2$. I use $I_1$ to denote the terms of the first case in $\overline{Q_d}((e_1^{d-1}e_2)^{d-1}(e_1^{d-j}{e_3}^j))$.
For the first case,  $i_s=1$ for some $s\geq3$ and $ i_2=0, i_1=d-s$,
so the coefficient of the terms of the first case is
\begin{eqnarray*}
(-1)^d(-1)^{i_1+i_2+\cdots  i_d}\frac{(i_1+i_2+\cdots  i_d-1)!}{i_1!\cdots i_d!}=(-1)^dd(-1)^{d-s-1}\frac{(d-s)!}{(d-s)!}=d(-1)^{s-1},
\end{eqnarray*}
and the corresponding monomial in $ Q_d(f)$ is
\begin{eqnarray*}d(-1)^{s-1}f_{1,d-1}^{d-s}f_{s,d-s}\cdot(1\otimes f^{s-1}).
\end{eqnarray*}
 Since the first component of $(e_1^{d-1}e_2)_{s,d-s}$ is divisible by $e_1^2$, by lemma \ref{divisible}, in order that the terms will not be killed by $(e_1^{d-1}e_2)$ via $\pi_{d,d}$, $e_1^{d-j}{e_3}^j$ should replace $(e_1^{d-1}e_2)$ in the position $f_{s,d-s}$.
By Lemma \ref{Brillpor},
\begin{eqnarray*}
I_1&\equiv&\sum_{s=3}^dd(-1)^{s-1}\frac{1}{d}(e_1^{d-1}e_2)_{1,d-1}^{d-s}
(e_1^{d-j}{e_3}^j)_{s,d-s}\cdot[1\otimes (e_1^{d-1}e_2)^{s-1}]\\
&=&\sum_{s=3}^{d}(-1)^{s-1}(e_1^{d-1}e_2)_{1,d-1}^{d-s}(e_1^{d-j}{e_3}^j)_{s,d-s}\cdot[1\otimes (e_1^{d-1}e_2)^{s-1}]\\
&\equiv&\sum_{s=3}^{d}(-1)^{s-1}[(d-1)e_1\otimes e_1^{d-2}e_2+e_2\otimes e_1^{d-1}]^{d-s}[\binom{j}{s}e_3^s\otimes e_1^{d-j}e_3^{j-s}\\
&&+(d-j)\binom{j}{s-1}e_1e_3^{s-1}\otimes e_1^{d-j-1}e_3^{j-s+1}]\cdot[1\otimes (e_1^{d-1}e_2)^{s-1}]\\
&\equiv&\sum_{s=3}^{d}(-1)^{s-1}[(d-1)(d-s)e_1e_2^{d-s-1}\otimes e_1^{d-2}e_2e_1^{(d-1)(d-s-1)}+e_2^{d-s}\otimes e_1^{(d-1)(d-s)}]\\
&&[\binom{j}{s}e_3^s\otimes e_1^{d-j}e_3^{j-s} +(d-j)\binom{j}{s-1}e_1e_3^{s-1}\otimes e_1^{d-j-1}e_3^{j-s+1}]\cdot[1\otimes (e_1^{d-1}e_2)^{s-1}]\\
&\equiv&\sum_{s=3}^{d}(-1)^{s-1}[\binom{j}{s}e_2^{d-s}e_3^s\otimes e_1^{d^2-j-d+1}e_2^{s-1}e_3^{j-s}\\
&&+(d-j)\binom{j}{s-1}e_1e_2^{d-s}e_3^{s-1}\otimes e_1^{d^2-j-d}e_2^se_3^{j-s+1}\\
&&+(d-1)(d-s)\binom{j}{s}e_1e_2^{d-s-1}e_3^s\otimes e_1^{d^2-j-d}e_2^{s}e_3^{j-s}].
\end{eqnarray*}

Similarly for the other four cases,
\begin{eqnarray*}
I_2&\equiv&\sum_{s=3}^{d}d(-1)^s(d-s-1)\frac{1}{d}(e_1^{d-1}e_2)_{1,d-1}^{d-s-2}(e_1^{d-1}e_2)_{2,d-2}(e_1^{d-j}{e_3}^j)_{s,d-s}\cdot[1\otimes (e_1^{d-1}e_2)^{s}]\\
&\equiv&\sum_{s=3}^{d}(-1)^s(d-s-1)[(d-1)e_1\otimes e_1^{d-2}e_2+e_2\otimes e_1^{d-1}]^{d-s-2}((d-1)e_1e_2\otimes e_1^{d-2})\\
&&[\binom{j}{s}e_3^s\otimes e_1^{d-j}e_3^{j-s}+(d-j)\binom{j}{s-1}e_1e_3^{s-1}\otimes e_1^{d-j-1}e_3^{j-s+1}]\cdot[1\otimes (e_1^{d-1}e_2)^{s-1}]\\
&\equiv&\sum_{s=3}^{d}(-1)^s(d-s-1)(d-1)\binom{j}{s}e_1e_2^{d-s-1}e_3^s\otimes e_1^{d^2-d-j}e_2^{s}e_3^{j-s}.
\end{eqnarray*}

\begin{eqnarray*}
I_3&\equiv&\frac{d(d-3)}{2}\frac{2}{d}(e_1^{d-1}e_2)_{1,d-1}^{d-4}(e_1^{d-1}e_2)_{2,d-2}(e_1^{d-j}{e_3}^j)_{2,d-2}\cdot[1\otimes(e_1^{d-1}e_2)^{2}]\\
&\equiv&(d-3)[(d-1)e_1\otimes e_1^{d-2}e_2+e_2\otimes e_1^{d-1}]^{d-4}[(d-1)e_1e_2\otimes e_1^{d-2}]\\
&&[\binom{j}{2}e_3^2\otimes e_1^{d-j}e_3^{j-2}]\cdot[1\otimes (e_1^{d-1}e_2)^{2}]\\
&\equiv&(d-3)(d-1)\binom{j}{s}e_1e_2^{d-3}e_3^2\otimes e_1^{d^2-d-j}e_2^{2}e_3^{j-2}.
\end{eqnarray*}

\begin{eqnarray*}
I_4&\equiv&-(d-2)(e_1^{d-1}e_2)_{1,d-1}^{d-3}(e_1^{d-j}{e_3}^j)_{1,d-1}(e_1^{d-1}e_2)_{2,d-2}\cdot[1\otimes (e_1^{d-1}e_2)]\\
&&-(e_1^{d-1}e_2)_{1,d-1}^{d-2}(e_1^{d-j}{e_3}^j)_{2,d-2}\cdot(1\otimes (e_1^{d-1}e_2))-(e_1^{d-1}e_2)_{1,d-1}^{d-2}(e_1^{d-1}e_2)_{2,d-2}\cdot[1\otimes (e_1^{d-j}{e_3}^j)]\\
&\equiv&-(d-2)[(d-1)e_1\otimes e_1^{d-2}e_2+e_2\otimes e_1^{d-1}]^{d-3}(j e_3\otimes e_1^{d-j}e_3^{j-1})[(d-1)e_1e_2\otimes e_1^{d-2}]\cdot[1\otimes (e_1^{d-1}e_2)]\\&&-[(d-1)e_1\otimes e_1^{d-2}e_2+e_2\otimes e_1^{d-1}]^{d-2}[(d-j)j e_1e_3\otimes e_1^{d-j-1}e_3^{j-1}
+\binom{j}{2}e_3^2\otimes e_1^{d-j}e_3^{j-2}]\\&&
\cdot[1\otimes (e_1^{d-1}e_2)]-[(d-1)e_1\otimes e_1^{d-2}e_2+e_2\otimes e_1^{d-1}]^{d-2}[(d-1)e_1e_2\otimes e_1^{d-2}]\cdot[1\otimes (e_1^{d-j}{e_3}^j)]\\
&\equiv&-(d-2)(e_2^{d-3}\otimes e_1^{(d-1)(d-3)})(j e_3\otimes e_1^{d-j}e_3^{j-1})((d-1)e_1e_2\otimes e_1^{d-2})\cdot(1\otimes (e_1^{d-1}e_2))\\
&&-[e_2^{d-2}\otimes e_1^{(d-1)(d-2)}+(d-1)(d-2)e_1e_2^{d-2}\otimes e_1^{d^2-3d+1}e_2][(d-j)j e_1e_3\otimes e_1^{d-j-1}e_3^{j-1}\\&&+\binom{j}{2}e_3^2\otimes e_1^{d-j}e_3^{j-2}]\cdot[1\otimes (e_1^{d-1}e_2)]
-(e_2^{d-2}\otimes e_1^{(d-1)(d-2)})[(d-1)e_1e_2\otimes e_1^{d-2}]\cdot(1\otimes [e_1^{d-j}{e_3}^j)]\\
&\equiv&[-(d-2)(d-1)j-j(d-j)] e_1e_2^{d-2}e_3\otimes e_1^{d^2-d-j}e_2e_3^{j-1}-\binom{j}{2}e_2^{d-2}e_3^2\otimes e_1^{d^2-j-d+1}e_2e_3^{j-2}\\&&-(d-2)(d-1)\binom{j}{2}e_1e_2^{d-3}e_3^2\otimes e_1^{d^2-j-d}e_2^2e_3^{j-2}-
(d-1)e_1e_2^{d-1}\otimes e_1^{d^2-j-d}e_3^{j}.
\end{eqnarray*}
\begin{eqnarray*}
I_5&\equiv&(e_1^{d-1}e_2)_{1,d-1}^{d-1}(e_1^{d-j}{e_3}^j)_{1,d-1}\\
&\equiv&[(d-1)e_1\otimes e_1^{d-2}e_2+e_2\otimes e_1^{d-1})]^{d-1}[j e_3\otimes e_1^{d-j}e_3^{j-1}+(d-j)e_1\otimes e_3^je_1^d]\\
&\equiv&[e_2^{d-1}\otimes e_1^{(d-1)(d-1)}+(d-1)^2e_1^{d-2}e_2\otimes e_1^{(d-2)^2+d-1}e_2^{d-2}][j e_3\otimes e_1^{d-j}e_3^{j-1}+(d-j)e_1\otimes e_3^je_1^d]\\
&\equiv&j e_2^{d-1}e_3\otimes e_1^{d^2-j}e_2e_3^{j-1}+(d-j)e_1e_2^{d-1}\otimes e_1^{d^2-j-d}e_3^{j}+j(d-1)^2e_1e_2^{d-2}e_3\otimes e_1^{d^2-j-d}e_2e_3^{j-1}.
\end{eqnarray*}
Therefore
\begin{eqnarray*}
\overline{Q_d}((e_1^{d-1}e_2)^{d-1}(e_1^{d-j}{e_3}^j))&\equiv&\sum_{s=0}^{min\{j,d-1\}}(-1)^s\binom{j}{s}(1-j)e_1e_2^{d-s-1}e_3^s\otimes e_1^{d^2-d-j}e_2^{s}e_3^{j-s}\\
&&+\sum_{s=1}^{j}(-1)^{s-1}\binom{j}{s}e_2^{d-s}e_3^s\otimes e_1^{d^2-d-j+1}e_2^{s-1}e_3^{j-s}.
\end{eqnarray*}
This implies
\begin{proposition}\label{secondpart}
\begin{eqnarray*}
&&{\pi_{d,d}\otimes Id_{S^{d^2-d}V}}((e_1^{d-1}e_2)\otimes\overline{Q_d}((e_1^{d-1}e_2)^{d-1}\cdot(e_1^{d-j}{e_3}^j))=\\
&&\sum_{s=0}^{min\{j,d-1\}}(-1)^{s-1}\binom{j}{s}(1-j)(d-1)!(e_1\wedge e_2)^{d-s}(e_1\wedge e_3)^{s}\otimes e_1^{d^2-d-j}e_2^{s}e_3^{j-s}\\
&&+\sum_{s=1}^{j}(-1)^{s-1}\binom{j}{s}s(d-1)!(e_1\wedge e_2)^{d-s}(e_1\wedge e_3)^{s-1}(e_2\wedge e_3)\otimes e_1^{d^2-d-j+1}e_2^{s-1}e_3^{j-s}.
\end{eqnarray*}
\end{proposition}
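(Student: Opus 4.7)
The plan is to finish the computation by applying $\pi_{d,d}(e_1^{d-1}e_2,\,\cdot)\otimes\operatorname{Id}_{S^{d^2-d}V}$ to the consolidated expression for $\overline{Q_d}((e_1^{d-1}e_2)^{d-1}(e_1^{d-j}e_3^j))$ derived just above. By Lemma \ref{divisible} any term whose first tensor factor is divisible by $e_1^2$ dies under $\pi_{d,d}(e_1^{d-1}e_2,\,\cdot)$, so the symbol $\equiv$ in that consolidated formula becomes equality once this pairing is applied. All that remains is to evaluate two specific pairings against $e_1^{d-1}e_2$ on the surviving monomials.

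First I would evaluate $\pi_{d,d}(e_1^{d-1}e_2,\,e_1e_2^{d-s-1}e_3^s)$ directly from the permanent-style sum \eqref{a}. Among the factors $(l_i\wedge m_{\sigma(i)})$, pairing any left-hand $e_1$ with the unique right-hand $e_1$ forces $e_1\wedge e_1=0$, so the only surviving permutations match $l_d=e_2$ with that unique $e_1$, contributing $e_2\wedge e_1=-e_1\wedge e_2$. The remaining $(d-1)!$ bijective pairings of the $d-1$ left-hand $e_1$'s against $e_2^{d-s-1}e_3^s$ each produce $(e_1\wedge e_2)^{d-s-1}(e_1\wedge e_3)^s$, giving the total $-(d-1)!(e_1\wedge e_2)^{d-s}(e_1\wedge e_3)^s$.

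Second, I would evaluate $\pi_{d,d}(e_1^{d-1}e_2,\,e_2^{d-s}e_3^s)$ in the same style. Here no $e_1$ appears on the right, so the obstruction switches to $e_2\wedge e_2=0$; this forces $l_d=e_2$ to pair with one of the $s$ copies of $e_3$ (contributing $e_2\wedge e_3$, with $s$ such choices), while the $(d-1)!$ remaining arrangements each contribute $(e_1\wedge e_2)^{d-s}(e_1\wedge e_3)^{s-1}$. The total is $s(d-1)!(e_1\wedge e_2)^{d-s}(e_1\wedge e_3)^{s-1}(e_2\wedge e_3)$.

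Substituting these two evaluations into the consolidated $\overline{Q_d}$ formula, tensoring with the identity on $S^{d^2-d}V$, and tracking the sign flip $(-1)^s\mapsto(-1)^{s-1}$ introduced by the minus sign in the first pairing, reproduces the two sums stated in the proposition. No genuine obstacle arises; the work is pure bookkeeping of factorials, binomial coefficients, and signs, and the alignment of the indexing $s$ in the two pairings with the indexing in the consolidated $\overline{Q_d}$ expression.
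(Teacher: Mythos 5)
Your proposal is correct and fills in exactly the step the paper leaves implicit (the paper's proof amounts to the single phrase \lq\lq This implies,\rq\rq{} following the consolidated $\equiv$-formula for $\overline{Q_d}((e_1^{d-1}e_2)^{d-1}(e_1^{d-j}e_3^j))$). Your two pairing evaluations are right: in $\pi_{d,d}(e_1^{d-1}e_2,\,e_1e_2^{d-s-1}e_3^s)$ the single right-hand $e_1$ must match $l_d=e_2$, giving the factor $-(d-1)!$; in $\pi_{d,d}(e_1^{d-1}e_2,\,e_2^{d-s}e_3^s)$ the $l_d=e_2$ must match one of the $s$ copies of $e_3$, giving $s(d-1)!$; substituting these and tracking the extra $-1$ reproduces the proposition exactly, and Lemma \ref{divisible} justifies promoting $\equiv$ to $=$.
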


 Proposition \ref{firstpart} and Proposition \ref{secondpart} imply:
\begin{proposition}\label{Brillimage}
\begin{eqnarray*}
&&\bar{\mathfrak{B}}((e_1^{d-1}e_2)^d\cdot(e_1^{d-j}{e_3}^j))=\frac{d!}{d+1}(e_1\wedge e_2)^{d-j}(e_3\wedge e_2)^{j}\otimes(e_1^{d^2-d})\\
&&+\frac{d!}{d+1}\sum_{s=0}^{j}(-1)^{s-1}\binom{j}{s}(1-j)(e_1\wedge e_2)^{d-s}(e_1\wedge e_3)^{s}\otimes e_1^{d^2-d-j}e_2^{s}e_3^{j-s}\\
&&+\frac{d!}{d+1}\sum_{s=1}^{j}(-1)^{s-1}\binom{j}{s}s(e_1\wedge e_2)^{d-s}(e_1\wedge e_3)^{s-1}(e_2\wedge e_3)\otimes e_1^{d^2-d-j+1}e_2^{s-1}e_3^{j-s}.
\end{eqnarray*}
\end{proposition}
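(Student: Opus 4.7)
The plan is to recognize that Proposition \ref{Brillimage} is the direct assembly of Propositions \ref{firstpart} and \ref{secondpart} via the explicit formula for Brill's map in Lemma \ref{Brillmapexp}. I would first invoke Lemma \ref{Brillmapexp} on $v_j = (e_1^{d-1}e_2)^d\cdot(e_1^{d-j}e_3^j)$. The sum over $i=1,\ldots,d+1$ collapses into two distinct contributions because $v_j$ has only two types of factors: the $d$ choices $i\in\{1,\ldots,d\}$ all contribute an identical term (removing one copy of $e_1^{d-1}e_2$), while the single choice $i=d+1$ removes $e_1^{d-j}e_3^j$. This yields
\begin{equation*}
\bar{\mathfrak{B}}(v_j)=\tfrac{1}{d+1}\,\pi_{d,d}\otimes\mathrm{Id}\bigl[(e_1^{d-j}e_3^j)\otimes \overline{Q_d}((e_1^{d-1}e_2)^d)\bigr] + \tfrac{d}{d+1}\,\pi_{d,d}\otimes\mathrm{Id}\bigl[(e_1^{d-1}e_2)\otimes\overline{Q_d}((e_1^{d-1}e_2)^{d-1}(e_1^{d-j}e_3^j))\bigr].
\end{equation*}

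Next, I would use the standard identity $\overline{P}(w^k)=P(w)$ for the polarization of any degree $k$ polynomial map (a consequence of the Stirling identity $\sum_{j=1}^k(-1)^{k-j}\binom{k}{j}j^k=k!$ applied to the definition of $\overline{P}$). This gives $\overline{Q_d}((e_1^{d-1}e_2)^d)=Q_d(e_1^{d-1}e_2)$, so the first summand above is precisely the quantity computed in Proposition \ref{firstpart}, producing the coefficient $\frac{d!}{d+1}$ after multiplying by $\frac{1}{d+1}$. The second summand is exactly the quantity computed in Proposition \ref{secondpart}, and multiplying by $\frac{d}{d+1}$ turns its prefactor $(d-1)!$ into $\frac{d!}{d+1}$. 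Adding the two pieces yields the three-line formula in the statement.

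The only subtle bookkeeping point — which I would flag as the item to verify carefully — is the unification of the $j<d$ and $j=d$ cases of Proposition \ref{firstpart} into a single expression. When $j=d$, Proposition \ref{firstpart} contributes the extra term $(d-1)d!\,(e_3\wedge e_1)^d\otimes e_1^{d^2-2d}e_2^d$, while Proposition \ref{secondpart}'s first sum stops at $s=\min\{j,d-1\}=d-1$. In Proposition \ref{Brillimage} the first sum is written as $\sum_{s=0}^{j}$, so when $j=d$ the $s=d$ index is genuinely new: its summand evaluates (using $(e_1\wedge e_2)^0=1$ and $(-1)^{s-1}=(-1)^{d-1}$) to $\frac{d!}{d+1}(-1)^{d-1}(1-d)(e_1\wedge e_3)^d\otimes e_1^{d^2-2d}e_2^d$, which equals $\frac{d!}{d+1}(d-1)(e_3\wedge e_1)^d\otimes e_1^{d^2-2d}e_2^d$ after using $(e_1\wedge e_3)^d=(-1)^d(e_3\wedge e_1)^d$. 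This matches the "extra" term from Proposition \ref{firstpart} exactly, so both cases merge into the single uniform expression. Beyond this bookkeeping, the proof is pure substitution and arithmetic, with no further obstacle.
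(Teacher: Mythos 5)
Your proposal is correct and matches the paper's approach exactly: the paper splits $\bar{\mathfrak{B}}(v_j)$ via Lemma~\ref{Brillmapexp} into the $\tfrac{1}{d+1}$ and $\tfrac{d}{d+1}$ pieces, applies $\overline{Q_d}((e_1^{d-1}e_2)^d)=Q_d(e_1^{d-1}e_2)$, and then substitutes Propositions~\ref{firstpart} and~\ref{secondpart}. Your explicit verification that the extra $j=d$ term of Proposition~\ref{firstpart} coincides with the $s=d$ summand of the middle sum, a point the paper leaves implicit when it writes $\sum_{s=0}^{j}$ rather than $\sum_{s=0}^{\min\{j,d-1\}}$, is the right bookkeeping and is carried out correctly.
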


\subsection{Orthogonal decomposition of $S_{(d,d)V}\otimes S^{d^2-d}V$}
Let ${e_1,e_2,e_3}$ be a basis of V and define a Hermitian inner product on V such that
\begin{eqnarray*}
<e_i,e_j>=\delta_{i,j}.
\end{eqnarray*}
Extend the Hermitian inner product to $V^{\otimes(d^2+d)}$ naturally by
\begin{eqnarray*}
<e_{i_1}\otimes\cdots\otimes e_{i_{d^2+d}},e_{j_1}\otimes\cdots \otimes e_{j_{d^2+d}}>=\delta_{i_1,j_1}\cdots \delta_{i_{d^2+d},j_{d^2+d}}.
\end{eqnarray*}
One can decompose $V^{\otimes(d^2+d)}$ into direct sum of isotypic components as a $GL(V)$-module.
Since the Hermitian inner product is unitary invariant, distinct isotypic components of $V^{\otimes(d^2+d)}$
are orthogonal (see e.g. \cite{FH}).

Consider $S_{(d,d)}V\otimes S^{d^2-d}V= S^d(\wedge^2V)\otimes S^{d^2-d}V $ as a subspace of $V^{\otimes(d^2+d)} $,
the decomposition $S_{(d,d)}V\otimes S^{d^2-d}V=\bigoplus_{j=0}^{d}S_{(d^2-j,d,j)}V$ is an orthogonal decomposition
with respect to the Hermitian inner product,
therefore
\begin{proposition}\label{test2}
 The projection of $\bar{\mathfrak{B}}(v_j)$ on $S_{(d^2-j,d,j)}V\subset S_{(d,d)}V\otimes S^{d^2-d}V$ is not 0 if and only if $<B(v_j),\tilde{v_j}>\neq 0$, where $\tilde{v_j}$ is defined in Lemma \ref{hwvectors}.
\end{proposition}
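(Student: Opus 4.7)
The plan is to exploit two facts: Brill's map is $GL(V)$-equivariant and therefore preserves weights, and the triple $(d^2-j,d,j)$ is exactly the highest weight of the component $S_{(d^2-j,d,j)}V$. Together with the orthogonality of distinct isotypic components just established in the preceding discussion, this will force the projection of $\bar{\mathfrak{B}}(v_j)$ onto $S_{(d^2-j,d,j)}V$ to be a scalar multiple of $\tilde{v_j}$, after which the biconditional is a one-line inner product calculation.

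The first step is to observe that $v_j = (e_1^{d-1}e_2)^d\cdot(e_1^{d-j}e_3^j)$ is a weight vector of weight $(d^2-j,d,j)$: the total exponent of $e_1$ is $d(d-1)+(d-j)=d^2-j$, that of $e_2$ is $d$, and that of $e_3$ is $j$. Since $\bar{\mathfrak{B}}$ is a $GL(V)$-module map, in particular it commutes with the action of the diagonal torus, so $\bar{\mathfrak{B}}(v_j)$ is a weight vector of the same weight $(d^2-j,d,j)$ in $S_{(d,d)}V\otimes S^{d^2-d}V$.

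Next, I would write the orthogonal decomposition $\bar{\mathfrak{B}}(v_j)=\sum_{k=0}^d p_k$, where $p_k$ denotes the orthogonal projection of $\bar{\mathfrak{B}}(v_j)$ onto the summand $S_{(d^2-k,d,k)}V$. Each $p_k$ is itself a weight vector of weight $(d^2-j,d,j)$ inside its isotypic summand. The key observation is that $(d^2-j,d,j)$ is the \emph{highest} weight of $S_{(d^2-j,d,j)}V$, and by the irreducibility of that module together with the explicit description in Lemma~\ref{hwvectors}, the corresponding weight space is one-dimensional and spanned by $\tilde{v_j}$. Hence $p_j = c\,\tilde{v_j}$ for some scalar $c\in\mathbb{C}$. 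Using orthogonality of distinct isotypic components,
\begin{eqnarray*}
\langle \bar{\mathfrak{B}}(v_j),\tilde{v_j}\rangle \;=\; \sum_{k=0}^d \langle p_k,\tilde{v_j}\rangle \;=\; \langle p_j,\tilde{v_j}\rangle \;=\; c\,\|\tilde{v_j}\|^2,
\end{eqnarray*}
and since $\|\tilde{v_j}\|^2 > 0$ this inner product is nonzero iff $c\neq 0$ iff $p_j\neq 0$.

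I do not anticipate any real obstacle: the argument is essentially formal bookkeeping that packages the $GL(V)$-equivariance of $\bar{\mathfrak{B}}$, the uniqueness of the highest weight line, and the orthogonality of the Pieri decomposition into a single computation. The one place where care is needed is the invocation that $p_j$ lies in a one-dimensional space spanned by $\tilde{v_j}$, which is what makes the inner product a faithful test for nonvanishing of the projection; this rests entirely on the irreducibility of $S_{(d^2-j,d,j)}V$ and on Lemma~\ref{hwvectors}.
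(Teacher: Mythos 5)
Your argument is correct and essentially matches the paper's (implicit) reasoning: the paper derives this proposition from the orthogonality of the Pieri decomposition inside $V^{\otimes(d^2+d)}$, together with the fact — already isolated in the proof of Proposition~\ref{test1} via Schur's lemma — that the projection of $\bar{\mathfrak{B}}(v_j)$ onto $S_{(d^2-j,d,j)}V$ must lie on the highest weight line spanned by $\tilde{v_j}$. You reach the same conclusion by tracking weights directly: $\bar{\mathfrak{B}}$ is torus-equivariant, so $\bar{\mathfrak{B}}(v_j)$ has weight $(d^2-j,d,j)$, and the $(d^2-j,d,j)$-weight space of the irreducible $S_{(d^2-j,d,j)}V$ is one-dimensional, forcing $p_j=c\,\tilde{v_j}$; orthogonality then gives $\langle\bar{\mathfrak{B}}(v_j),\tilde{v_j}\rangle=c\|\tilde{v_j}\|^2$. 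This is the same mechanism the paper uses, just made self-contained rather than delegating the "scalar multiple of $\tilde{v_j}$" step to the earlier Schur's lemma argument.
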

\begin{lemma}\label{innerproduct}
Suppose $a_1+a_2+a_3=d$ and $b_1+b_2+b_3=d^2-d$,
\begin{eqnarray*}
& &<(e_1\wedge e_2)^{a_1}(e_1\wedge e_3)^{a_2}(e_2\wedge e_3)^{a_3}\otimes e_1^{b_1}e_2^{b_2}e_3^{b_3},(e_1\wedge e_2)^{a_1}(e_1\wedge e_3)^{a_2}(e_2\wedge e_3)^{a_3}\otimes e_1^{b_1}e_2^{b_2}e_3^{b_3}>\\&=&(\frac{1}{2})^d\frac{a_1!a_2!a_3!}{d!}\frac{b_1!b_2!b_3!}{(d^2-d)!}
\end{eqnarray*}
\end{lemma}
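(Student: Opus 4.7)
The plan is to reduce the claim to a direct computation by unwinding the embedding of $S^d(\wedge^2 V)\otimes S^{d^2-d}V$ into $V^{\otimes(d^2+d)}$, using the (anti)symmetrization maps with the standard $1/k!$ normalization, and then invoking the fact that the inner product on a tensor product factorizes.

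\medskip

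\textbf{Step 1 (conventions and factorization).} Since the inner product on $V^{\otimes(d^2+d)}=V^{\otimes 2d}\otimes V^{\otimes(d^2-d)}$ is the tensor product of inner products, I split
\[
\big\|\,(e_1\!\wedge\!e_2)^{a_1}(e_1\!\wedge\!e_3)^{a_2}(e_2\!\wedge\!e_3)^{a_3}\otimes e_1^{b_1}e_2^{b_2}e_3^{b_3}\,\big\|^2=\big\|(e_1\!\wedge\!e_2)^{a_1}(e_1\!\wedge\!e_3)^{a_2}(e_2\!\wedge\!e_3)^{a_3}\big\|^2\cdot \big\|e_1^{b_1}e_2^{b_2}e_3^{b_3}\big\|^2,
\]
where the first factor is computed inside $V^{\otimes 2d}$ and the second inside $V^{\otimes(d^2-d)}$. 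I fix throughout the embeddings $S^kV\hookrightarrow V^{\otimes k}$ and $\wedge^2 V\hookrightarrow V^{\otimes 2}$ given by normalized symmetrization $v_1\cdots v_k\mapsto \tfrac{1}{k!}\sum_{\sigma\in\mathfrak{S}_k}v_{\sigma(1)}\otimes\cdots\otimes v_{\sigma(k)}$ and by normalized alternation $e_i\wedge e_j\mapsto \tfrac12(e_i\otimes e_j-e_j\otimes e_i)$; these are the choices which make the two factors pair off consistently (a quick sanity check against the $d=1$ and $d^2-d=0$ cases recovers $\tfrac12$ and $1$, matching the claimed formula).

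\medskip

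\textbf{Step 2 (the symmetric factor).} Under the above embedding, $e_1^{b_1}e_2^{b_2}e_3^{b_3}$ is a sum of $\binom{d^2-d}{b_1,b_2,b_3}=\tfrac{(d^2-d)!}{b_1!b_2!b_3!}$ pairwise distinct orthonormal tensor monomials in $V^{\otimes(d^2-d)}$, each appearing with coefficient $\tfrac{b_1!b_2!b_3!}{(d^2-d)!}$. Therefore
\[
\big\|e_1^{b_1}e_2^{b_2}e_3^{b_3}\big\|^2=\frac{(d^2-d)!}{b_1!b_2!b_3!}\cdot\left(\frac{b_1!b_2!b_3!}{(d^2-d)!}\right)^{\!2}=\frac{b_1!b_2!b_3!}{(d^2-d)!}.
\]

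\medskip

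\textbf{Step 3 (the wedge factor).} Let $w_1=e_1\wedge e_2$, $w_2=e_1\wedge e_3$, $w_3=e_2\wedge e_3$. A direct $2$-tensor computation gives $\langle w_i,w_j\rangle=\tfrac12\delta_{ij}$ in $V^{\otimes 2}$, so the $w_i$ are pairwise orthogonal in $\wedge^2 V$ with $\|w_i\|^2=\tfrac12$. The element $w_1^{a_1}w_2^{a_2}w_3^{a_3}\in S^d(\wedge^2 V)$ is, via the normalized symmetrization into $(\wedge^2 V)^{\otimes d}$, the average over orderings of $w_1^{\otimes a_1}\otimes w_2^{\otimes a_2}\otimes w_3^{\otimes a_3}$. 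Because orthogonality of the $w_i$ lifts to orthogonality of distinct ordered tensor monomials in $(\wedge^2 V)^{\otimes d}$, the same combinatorial count as in Step 2 — now with $\tfrac{d!}{a_1!a_2!a_3!}$ orthogonal tensor monomials, each of squared norm $(1/2)^d$ and coefficient $\tfrac{a_1!a_2!a_3!}{d!}$ — yields
\[
\big\|w_1^{a_1}w_2^{a_2}w_3^{a_3}\big\|^2=\frac{d!}{a_1!a_2!a_3!}\cdot\left(\frac{a_1!a_2!a_3!}{d!}\right)^{\!2}\cdot\left(\tfrac12\right)^d=\left(\tfrac12\right)^d\frac{a_1!a_2!a_3!}{d!}.
\]

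\medskip

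\textbf{Step 4 (assembly).} Multiplying the results of Steps 2 and 3 gives the claimed formula. The only real obstacle is bookkeeping: one must keep track of the $1/k!$ normalizations in both the symmetrization on $S^dV$ and the nested symmetrization/alternation defining $S^d(\wedge^2 V)\subset V^{\otimes 2d}$, since the three factors $1/(d^2-d)!$, $1/d!$, and $(1/2)^d$ arise from three separate combinatorial normalizations. Once the conventions are pinned down (as in Step 1), Steps 2 and 3 are essentially identical multinomial counts.
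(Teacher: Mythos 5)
The paper states Lemma \ref{innerproduct} without proof, so there is no argument to compare against; your task was essentially to supply the missing computation, and your proof does so correctly. The two key observations — that the Hermitian inner product on $V^{\otimes(d^2+d)}$ factors across the tensor product $V^{\otimes 2d}\otimes V^{\otimes(d^2-d)}$, and that under the normalized symmetrization/alternation embeddings a product of basis vectors decomposes into $\tfrac{n!}{c_1!c_2!c_3!}$ pairwise-orthogonal tensor monomials each with coefficient $\tfrac{c_1!c_2!c_3!}{n!}$ — are exactly what is needed. Step 3 correctly handles the extra subtlety that the $w_i=e_i\wedge e_j$ are orthogonal but not orthonormal (each of squared norm $\tfrac12$), which is where the $(\tfrac12)^d$ factor comes from, and correctly notes that distinct tensor monomials in the $w_i$ remain orthogonal inside $V^{\otimes 2d}$ because the inner product there restricts multiplicatively to $(V^{\otimes 2})^{\otimes d}$. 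Your sanity check pinning down the $\tfrac{1}{k!}$ and $\tfrac12$ normalizations (which the paper never spells out) is a worthwhile addition, since the stated formula is only correct with that convention. The argument is complete and correct.
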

Recall by Lemma \ref{hwvectors},
\begin{eqnarray*}
\tilde{v_j}=\sum_{s=0}^{j}\sum_{t=0}^{s}(-1)^t\binom{j}{s}\binom{s}{t}(e_1\wedge e_2)^{d+s-j-t}(e_1\wedge e_3)^{t}(e_1\wedge e_2)^{j-s}\otimes e_1^{d^2-d-s}e_2^te_3^{s-t}.
 \end{eqnarray*}
 and by Proposition \ref{Brillimage},
 \begin{eqnarray*}
&&\bar{\mathfrak{B}}((e_1^{d-1}e_2)^d\cdot(e_1^{d-j}{e_3}^j))=\frac{d!}{d+1}(-1)^j(e_1\wedge e_2)^{d-j}(e_2\wedge e_3)^{j}\otimes(e_1^{d^2-d})\\
&&+\frac{d!}{d+1}\sum_{t=0}^{j}(-1)^{t}\binom{j}{t}(j-1)(e_1\wedge e_2)^{d-t}(e_1\wedge e_3)^{t}\otimes e_1^{d^2-d-j}e_2^{t}e_3^{j-t}\\
&&+\frac{d!}{d+1}\sum_{t=1}^{j}(-1)^{t-1}\binom{j}{t}t(e_1\wedge e_2)^{d-t}(e_1\wedge e_3)^{t-1}(e_2\wedge e_3)\otimes e_1^{d^2-d-j+1}e_2^{t-1}e_3^{j-t}.
\end{eqnarray*}
By Lemma \ref{innerproduct},
\begin{proposition}
For any fixed $j\in \{0,1,\cdots,d\}$,
\begin{eqnarray*}
<B(v_j),\tilde{v_j}>&=&\frac{(\frac{1}{2})^d}{(d+1)(d^2-d)!}(\sum_{t=0}^{j}\frac{(j!)^2(j-1)(d-t)!(d^2-d-j)!}{(j-t)!}\\
&+&\sum_{t=0}^{j-1}\frac{(j!)^2(d-t-1)!(d^2-d-j+1)!}{(j-t-1)!}+(-1)^j(d-j)!j!(d^2-d)!).
\end{eqnarray*}
$<B(v_j),\tilde{v_j}>=0$ only when
\begin{enumerate}
\item $j=0,1$ for all $d\geq2$;
\item $j=3$ and $d=3$.
\end{enumerate}
\end{proposition}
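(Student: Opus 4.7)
The plan is to compute $\langle \bar{\mathfrak{B}}(v_j),\tilde{v_j}\rangle$ in closed form using orthogonality of the standard weight basis of $S^d(\wedge^2V)\otimes S^{d^2-d}V$, and then to analyze when this closed form vanishes.

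By Proposition \ref{Brillimage} and Lemma \ref{hwvectors}, both $\bar{\mathfrak{B}}(v_j)$ and $\tilde{v_j}$ are linear combinations of basis vectors of the form $(e_1\wedge e_2)^{a_1}(e_1\wedge e_3)^{a_2}(e_2\wedge e_3)^{a_3}\otimes e_1^{b_1}e_2^{b_2}e_3^{b_3}$, and by unitary invariance distinct such basis vectors are orthogonal. I would match each of the three summands of $\bar{\mathfrak{B}}(v_j)$ to its unique partner in $\tilde{v_j}$: the first summand pairs with the $(s,t')=(0,0)$ term; the $t$-indexed part of the second sum pairs with $(s,t')=(j,t)$; and the $t$-indexed part of the third sum pairs with $(s,t')=(j-1,t-1)$. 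Feeding the squared norms from Lemma \ref{innerproduct} into these pairings, multiplying by the product of scalar coefficients, and simplifying via the identity $\binom{j}{t}^2(t!)^2(j-t)!=(j!)^2/(j-t)!$ reproduces the three sums stated in the proposition.

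For the vanishing analysis, I would first apply the hockey stick identity $\sum_{u=0}^{n}\binom{r+u}{u}=\binom{r+n+1}{n}$ (via the substitutions $u=j-t$ and $u=j-1-t$) to collapse the two finite sums to $(d-j)!\binom{d+1}{j}$ and $(d-j)!\binom{d}{j-1}$, and rewrite the third contribution using $(d^2-d)!/(d^2-d-j)!=j!\binom{d^2-d}{j}$. After factoring out the strictly positive prefactor $(1/2)^d(j!)^2(d-j)!(d^2-d-j)!/[(d+1)(d^2-d)!]$, vanishing of $\langle B(v_j),\tilde{v_j}\rangle$ becomes equivalent to vanishing of
\[
F(j,d):=(j-1)\binom{d+1}{j}+(d^2-d-j+1)\binom{d}{j-1}+(-1)^j\binom{d^2-d}{j}.
\]
Direct substitution handles $j\in\{0,1\}$, where $F\equiv 0$. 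For $j\geq 2$ even, all three terms are nonnegative with $\binom{d^2-d}{j}>0$, hence $F>0$. For $j=3$, a short manipulation (writing $N=d^2-d$, clearing a common factor $d(d-1)/6$) yields the factorization $F(3,d)=-d(d-1)(d-3)(d+1)(d^2-2)/6$, whose only integer zero on $d\geq 3$ is $d=3$.

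The main obstacle is the case $j\geq 5$ odd, where $F$ is a difference of large positive quantities and one has to rule out any accidental integer zero on $d\geq j$. My strategy is to rewrite the positive piece as $\binom{d}{j-1}\cdot(jd^2-j^2+2j-d-1)/j$, bounded above by a constant multiple of $d^2\binom{d}{j-1}$, and to bound the negative term from below using the estimate $(d^2-d-i)/(d-i)\geq d+i-1$ for $2\leq i\leq j-1$ (with controlled endpoint corrections at $i=0,1$), yielding $\binom{d^2-d}{j}/\binom{d}{j-1}\gtrsim (d-1)(d+1)(d+2)\cdots(d+j-2)(d-j+1)/j$. For $d\geq j\geq 5$ this lower bound already dominates the $O(d^2)$ upper bound on the competing terms, forcing $F(j,d)<0$ strictly. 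Combining this with the earlier cases completes the vanishing analysis and gives the proposition.
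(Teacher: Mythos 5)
Your proposal is correct, and it takes a genuinely different (and more rigorous) route than the paper for the vanishing analysis. The paper's proof merely observes that the ratio of the last term $(-1)^j(d-j)!j!(d^2-d)!$ to the other summands grows like $\binom{d^2-d}{j}\big/\binom{d-t}{j-t}$, concludes that the alternating term ``dominates'' for $d$ large and $j\geq 2$, and then disposes of the remaining cases with ``one can check directly'' --- without saying which small cases need checking or why the answer is exactly $j\in\{0,1\}$ and $(j,d)=(3,3)$. You instead use the hockey stick identity $\sum_{u=0}^{n}\binom{r+u}{u}=\binom{r+n+1}{n}$ to collapse both sums to closed binomials, factor out the positive prefactor $(1/2)^d(j!)^2(d-j)!(d^2-d-j)!/[(d+1)(d^2-d)!]$, and reduce the question to a single explicit polynomial $F(j,d)=(j-1)\binom{d+1}{j}+(d^2-d-j+1)\binom{d}{j-1}+(-1)^j\binom{d^2-d}{j}$. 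From there the case analysis is airtight: $j\in\{0,1\}$ gives $F\equiv 0$ by substitution; for even $j\geq 2$ all three terms are nonnegative with $\binom{d^2-d}{j}>0$, so $F>0$; for $j=3$ the factorization $F(3,d)=-d(d-1)(d-3)(d+1)(d^2-2)/6$ pins down the unique integer zero $d=3$ on $d\geq 3$; and for odd $j\geq 5$ your growth estimate works --- in fact one can streamline it by noting $(d^2-d-i)/(d-i)\geq d-1$ for all $0\leq i\leq j-2$ (the ratio is increasing in $i$), which gives $\binom{d^2-d}{j}/\binom{d}{j-1}\geq (d^2-d-j+1)(d-1)^{j-1}/j\geq (d-1)^{j+1}/j$, and for $d\geq j\geq 5$ this already exceeds the $d^2$ that bounds the positive part. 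What you gain is a fully explicit, verifiable proof of exactly the advertised zero set; what the paper gains is brevity at the cost of leaving the finite case-check implicit.
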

\begin{proof}
The ratio of $(d-j)!j!(d^2-d)!$ and $\frac{(j!)^2(j-1)(d-t)!(d^2-d-j)!}{(j-t)!}$ is
$\frac{\binom{d^2-d}{j}}{(j-1)\binom{d-t}{j-t}}$, and the ratio of $(d-j)!j!(d^2-d)!$ and $\frac{(j!)^2(d-t-1)!(d^2-d-j+1)!}{(j-t-1)!}$ is
$\frac{\binom{d^2-d}{j-1}}{j\binom{d-t-1}{j-t-1}}$. Therefore when $d$ is large enough and $j\geq2$, the term $(-1)^j(d-j)!j!(d^2-d)!$ dominates.
For small cases, one can check directly.
\end{proof}
Combining all the results above, we  prove Theorem \ref{Brillmap} to prove Theorem \ref{Brillmodule}.

\begin{proof}[Proof of Theorem \ref{Brillmap}]
First, for $j=0$ and $d\geq2$, $S_{(d^2,d)}V$ is not in the image of Brill's map because
$Ch_d(\mathbb{C}^2)=S^d\mathbb{C}^2$.

Second, for $j=1$ and all $d\geq2$, $S_{(d^2-1,d,1)}V$ is not in the image of Brill's map.
If it were in the image of Brill's map, then $\mathfrak{B}(v_1)=\mathfrak{B}(v_{13})=C\tilde{v_1}\neq0$ (where $v_{13}$ is defined in the proof of Proposition \ref{test1}),
so $<B(v_1),\tilde{v_1}>=<C\tilde{v_1},\tilde{v_1}>\neq0$, contradiction.

Third, when $d=3$ and $j=3$, the module $S_{(6,3,3)}V$ is not in the decomposition of $S^4(S^3V)$,
so it is not in the image.

Finally, for other cases, $<B(v_j),\tilde{v_j}>\neq0$, by Proposition \ref{test1} and Proposition \ref{test2},
$S_{(d^2-j,d,j)}V$ is in the image of Brill's map.
\end{proof}

\bibliographystyle{amsplain}
\bibliography{Lmatrix}
\end{document}